\numberwithin{equation}{section}
\DeclareMathOperator{\supp}{supp}
\DeclareMathOperator{\Span}{span}
\newcommand{\tn}{|\kern-1pt|\kern-1pt|}
\newcommand{\mcN}{\mathcal{N}}
\newcommand{\mcF}{\mathcal{F}}
\newcommand{\mcT}{\mathcal{T}}
\newcommand{\IR}{\mathbb{R}}
\newtheorem{lem}{Lemma}[section]
\newtheorem{thm}{Theorem}[section]
\newtheorem{rem}{Remark}[section]
\newenvironment{proof}{\noindent \newline {\bf Proof.}}
{\hfill \mbox{\fbox{} } \newline}
\title{\bf CutIGA with Basis Function Removal \thanks{This research was supported in part by the Swedish Foundation for Strategic Research Grant No.\ AM13-0029, the Swedish Research Council Grants Nos.\  2013-4708, 2017-03911, and the Swedish Research Programme Essence}}
\date{\today}
\author{
Daniel Elfverson\footnote{Department of Mathematics and Mathematical Statistics, Ume{\aa} University, SE-90187 Ume{\aa}, Sweden} 
\mbox{ }  
Mats G. Larson\footnote{Department of Mathematics and Mathematical Statistics, Ume{\aa} University, SE-90187 Ume{\aa}, Sweden} 
\mbox{ }  
Karl Larsson\footnote{Department of Mathematics and Mathematical Statistics, Ume{\aa} University, SE-90187 Ume{\aa}, Sweden} 
}
\date{\today}
\begin{document}
\maketitle
\begin{abstract}
We consider a cut isogeometric method, where the boundary of the domain is allowed to cut 
through the background mesh in an arbitrary fashion for a second order elliptic model 
problem. In order to stabilize the method on the cut boundary we remove basis 
functions which have small intersection with the computational domain. We determine criteria 
on the intersection which guarantee that the order of convergence in the energy norm is not affected by the removal. The higher order 
regularity of the B-spline basis functions leads to improved 
bounds compared to standard Lagrange elements.

\end{abstract}

\section{Introduction}

\paragraph{Background and Earlier Work.}
CutFEM and CutIGA, are methods where the geometry of the 
domain is allowed to cut through the background mesh in an arbitrary fashion, which manufactures so called 
cut elements at the boundary. This approach typically leads to some loss of stability and ill conditioning of the resulting stiffness matrix that must be handled in some way. Several approaches have 
been proposed:
\begin{itemize}
\item Gradient jump penalties or some related stabilization term, see \cite{Bu10} and 
\cite{BuCl15}. 
\item Adding a small amount of extra stiffness to each active element as is done in the finite cell method, see \cite{DaDu15} and \cite{PaDu07}. 
\item Element merging where small elements are associated with a neighbor which has 
a large intersection. For DG methods see \cite{JoLa2013} 
and for CG methods see \cite{BaVe17}. 
\item Basis function removal where basis functions with support that has a small 
intersection with the domain are removed. For the case of isogeometric spline spaces 
see \cite{EmDo10}.
\end{itemize}
For a general introduction to CutFEM we refer to the overview 
paper \cite{BuCl15} and for an introduction to isogeometric analysis we refer to \cite{CoHu09}.

\paragraph{New Contributions.} We investigate the basis function removal approach based on simply eliminating basis functions that 
has a small intersection with the domain in the context of isogeometric analysis, more precisely we employ B-spline spaces 
of order $p$ with maximal regularity $C^{p-1}$. To this end we need 
to make the meaning of small intersection precise and our guideline will be that we should not lose order in a given norm. In particular, we consider the the error in the energy norm and show that we may remove basis functions with sufficiently small energy norm and 
still retain optimal order convergence.

% and include some comments on the 
% $W^1_\infty$ norm. The latter norm is strong but is often of 
% practical interest in engineering applications where analysts seek local stress data for later use in durability analysis.

We also quantify the meaning of a basis function with sufficiently small energy norm in terms of the size of the intersection between 
the support of the basis function and the domain. 
In order to measure the size of the intersection we consider a 
corner inside the domain and we let $\delta_i$, $i=1,\dots,d$ with 
$d$ the dimension, be the distance from the corner to the intersection of edge $E_i$ with the boundary. If there is no intersection 
$\delta_i = h$. We then identify a condition on $\delta_i$ 
in terms of the mesh parameter $h$ which guarantees that we have optimal order convergence in the energy norm. The energy norm of the 
basis functions may be approximated by the diagonal element of the 
stiffness matrix and we propose a convenient selection procedure 
based on the diagonal elements in the stiffness matrix which is 
easy to implement. 

We also derive the condition on $\delta_i$ corresponding to 
the $W^1_\infty$ norm, which will be tighter since the norm is stronger and here we also need the continuity of the derivative 
of the basis functions. We discuss the approach in the context of standard Lagrange basis function where we note that we get much a tighter condition on $\delta_i$ in the energy norm and in the 
$W^1_\infty$ norm we find that it is not possible to remove 
basis functions.

We impose Dirichlet conditions weakly using nonsymmetric 
Nitsche, which is coercive by definition. Since the energy 
norm used in the nonsymmetric Nitsche method does not control 
the normal gradient on the Dirichlet boundary we do however 
need to add a standard least squares stabilization term on 
the elements in the vicinity of the boundary. Note that this 
term is element wise in contrast to the stabilization terms 
usually used in CutFEM.

When symmetric Nitsche is used to enforce Dirichlet boundary conditions stabilization appears to be necessary to guarantee 
that a certain inverse estimate holds. This bound is not improved by 
the higher regularity of the splines and will not be enforced 
in a satisfactory manner by basis function removal. 

\paragraph{Outline:} In Section 2 we introduce the model problem and the method, in Chapter 3 we derive properties of the bilinear form, 
define the interpolation operator, define the criteria for basis 
function removal, derive error bounds, and quantify $\delta$ in terms of $h$ for various norms, and finally in Section 4 we  present some illustrating numerical examples.

\section{The Model Problem and Method}

\subsection{Model Problem}
Let $\Omega$ be a domain in $\IR^d$ with smooth boundary $\partial \Omega$ 
and consider the problem: find $u: \Omega \rightarrow \IR$ such that 
\begin{alignat}{3}
-\Delta u &=f & \qquad &\text{in $\Omega$}
\\
u &=g_N & \qquad &\text{on  $\partial \Omega_N$}
\\
n\cdot \nabla u &=g_D & \qquad &\text{on  $\partial \Omega_D$}
\end{alignat}
For sufficiently regular data there exists a unique solution to this problem and we will be interested in higher order methods and therefore we will always assume that the solution satisfies the regularity estimate 
\begin{equation}\label{eq:regularity}
\| u \|_{H^{s}(\Omega)} \lesssim 1
\end{equation}
for $s\geq 2$. Here and below $a \lesssim b$ means that there is 
a positive constant $C$ such that $a \leq C b$.

% \begin{equation}\label{eq:regularity}
% \| u \|_{H^{s+2}(\Omega)} \lesssim 
% \|f \|_{H^{s}(\Omega)} 
% + 
% \|g_D \|_{H^{s-1/2}(\partial\Omega_D)}
% +
% \|g_N \|_{H^{s-3/2}(\partial\Omega_N)}
% \end{equation}
% We consider in particular higher order approximation of sufficiently regular 
% solutions. More precisely, we always assume that $s\geq 0$ in (\ref{eq:regularity}).

\subsection{The Finite Element Method}

\begin{figure}
\centering
\subcaptionbox{$C^1 Q^2(\IR)$}{
\includegraphics[width=0.9\linewidth]{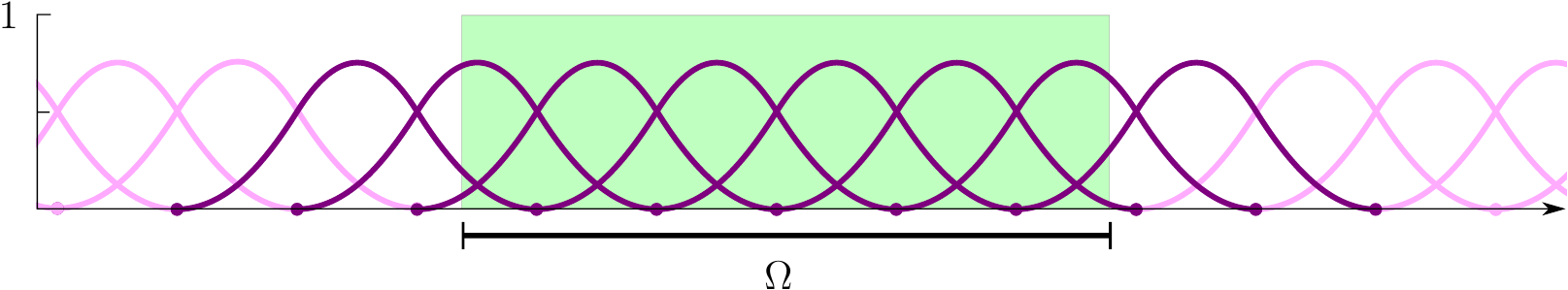}
}
\subcaptionbox{$C^2 Q^3(\IR)$}{
\includegraphics[width=0.9\linewidth]{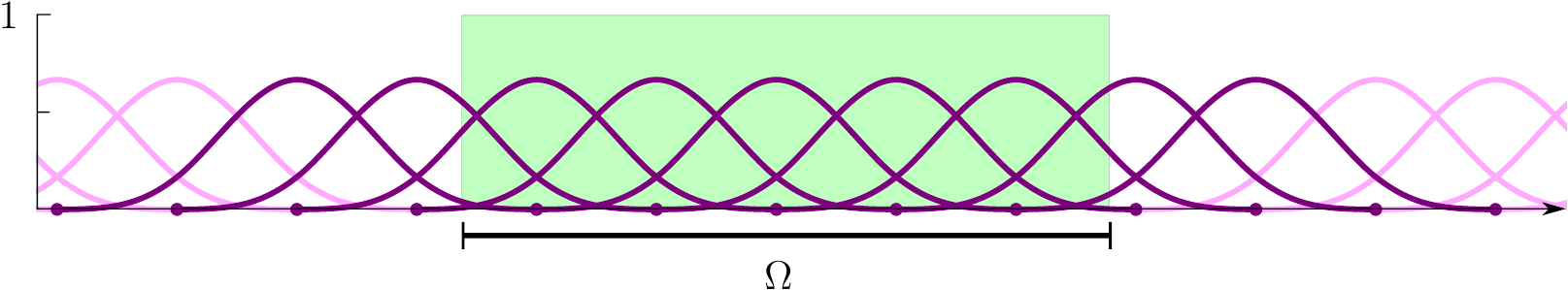}
}
\caption{B-spline basis functions in one dimension. The set $B$ of basis functions with non-empty support in $\Omega$ are indicated in deep purple. Note that basis functions crossing the boundary of $\Omega$ are defined analogously to interior basis functions.}
\label{fig:splines}
\end{figure}

\paragraph{The B-Spline Spaces.} 
\begin{itemize}
\item Let $\widetilde{\mcT}_{h}$, $h \in (0,h_0]$, be a family 
of uniform tensor product meshes in $\IR^d$ with mesh parameter 
$h$. 
\item Let $\widetilde{V}_{h}=C^{p-1}Q^p(\IR^d)$ be the space of 
$C^{p-1}$ tensor product B-splines of order $p$ defined on $\widetilde{\mcT}_{h}$. Let $\widetilde{B} = \{\varphi_i\}_{ i\in \widetilde{I}}$ be the standard basis in 
$\widetilde{V}_h$, where $\widetilde{I}$ is an index set.
\item Let $B = \{ \varphi \in \widetilde{B} \, : \,
\supp(\varphi) \cap \Omega \neq \emptyset \}$ be the set of basis functions with support that intersects $\Omega$. Let $I$ be an 
index set for $B$. Let $V_h = \Span\{B\}$ and let $\mcT_h = \{ T \in \widetilde{\mcT}_h : T \subset \cup_{\varphi \in B } \supp(\varphi) \}$. An illustration of the basis functions in $1D$ is given in Figure~\ref{fig:splines}.
\item Let $B = B_a \cup B_r$ be a partition into a set $B_a$ of active basis functions which we keep and a set $B_r$ of basis functions 
which we remove. Let $I = I_a \cup I_r$ 
be the corresponding partition of the index set. Let $V_{h,a} = \Span\{B_a\}$ be the active finite element space.
\end{itemize}
\begin{rem} To construct the basis functions in $\widetilde{V}_h$ 
we start with the one dimensional line $\IR$ and define a uniform partition, with nodes $x_i = i h$, $i \in \mathbb{Z}$, where $h$ 
is the mesh parameter, and elements $I_i = [x_{i-1},x_i)$. We 
define 
\begin{equation}
\varphi_{i,0}(x) 
=
\begin{cases}
1 & x \in I_i
\\
0 & x \in \IR \setminus I_i
\end{cases}
\end{equation}
The basis functions $\varphi_{i,p}$ are then defined by the 
Cox-de Boor recursion formula 
\begin{equation}
\varphi_{i,p} = \frac{x - x_i}{x_{i+p} - x_i} \varphi_{i,p-1}(x) 
+ \frac{x_{i+p+1} - x}{x_{i+p+1} - x_{i+1}} \varphi_{i+1,p-1}(x) 
\end{equation}
we note that these basis functions are $C^{p-1}$ and supported 
on $[x_i,x_{i+p+1}]$ which corresponds to $p+1$ elements, see Figure~\ref{fig:splines}.
We then define tensor product basis functions in $\IR^d$ of the 
form 
\begin{equation}
\varphi_{i_1,\dots,i_d} (x) = \prod_{k=1}^d \varphi_{i_k}(x_k)
\end{equation}
\end{rem}

%\todo{Terminology could be improved. We want to use active in the usual meaning.}

\paragraph{The Nonsymmetric Method.} Find 
$u_{h,a} \in V_{h,a}$ such that 
\begin{equation}\label{eq:method-nonsym}
A_{h}(u_{h,a},v) = L_h(v)\qquad v \in V_{h,a}
\end{equation}
The forms are defined by
\begin{align}\label{eq:Ah}
A_h(v,w) &= a_h(v,w) + \tau h^2(\Delta v,\Delta w)_{\mcT_{h,D} \cap \Omega}
\\ \label{eq:Lh}
L_h(v) &= l_h(v) + \tau h^2 (f,\Delta v)_{\mcT_{h,D}\cap \Omega}
\end{align}
where 
\begin{align}
\label{eq:ah}
a_h(v,w) &= (\nabla v,\nabla w)_\Omega 
- (n\cdot \nabla v,w)_{\partial \Omega_D} 
+ (v, n\cdot \nabla w)_{\partial \Omega_D} 
+ \beta h^{-1} (v,w)_{\partial \Omega_D}
\\ \label{eq:lh}
l_h(v)&= (f, v)_\Omega 
+ (g_N,v)_{\partial \Omega_N}
+ (g_D,n\cdot \nabla v)_{\partial\Omega_D} 
+ \beta h^{-1}(g_D,v)_{\partial \Omega_D}  
\end{align}
with positive parameters $\beta$ and $\tau$. Furthermore, 
we used the notation
\begin{equation}
(v,w)_{\mcT_{h,D} \cap \Omega} 
= \sum_{T \in \mcT_{h,D}} (v,w)_{T\cap \Omega}
\end{equation}
where $\mcT_{h,D} \subset \mcT_h$ is defined by 
\begin{equation}\label{def:ThD}
\mcT_{h,D} = \mcT_h(U_\delta (\partial \Omega_D))
= \{T \in \mcT_h : T \cap U_\delta (\partial \Omega_D) \neq \emptyset \}  
\end{equation}
and
\begin{equation}\label{eq:Udelta}
U_\delta (\partial \Omega_D) = \left(\bigcup_{x\in \partial \Omega_D} B_\delta(x)\right) 
\cap \Omega
\end{equation}
with $\delta \sim h$ and $B_\delta(x)$ the open ball with center $x$ and 
radius $\delta$. We note that it follows from (\ref{def:ThD}) that $U_\delta(\partial \Omega_D) \subset \mcT_{h,D}$. 

\paragraph{Galerkin Orthogonality.} It holds 
\begin{equation}\label{eq:galort}
A_h( u - u_h, v) = 0 \qquad \forall v \in V_h
\end{equation}

\begin{rem}
In practice, $\mcT_{h,D}$ 
may be taken as the set of all elements that intersect the Dirichlet boundary $\partial \Omega_D$ and their neighbors, i.e. $\mcT_{h,D} = \mcN_h(\mcT_h(\partial \Omega_D))$.
\end{rem}

\begin{rem}{\bf (The Symmetric Method)} The symmetric version 
of (\ref{eq:method-nonsym}) takes the form: find $u_{h,a} \in V_{h,a}$ such that 
\begin{equation}
a_{h,\mathrm{sym}}(u_{h,a},v) + s_{h,\mathrm{sym}}(u_{h,a},v) = l_{h,\mathrm{sym}}(v)\qquad v \in V_{h,a}
\end{equation}
The forms are defined by
\begin{align}
%\label{eq:Ah}
%A_h(v,w) &= a_{h,\mathrm{sym}}(v,w) + s_h(v,w)
%\\
\label{eq:ah-sym}
a_{h,\mathrm{sym}}(v,w) &= (\nabla v,\nabla w)_\Omega 
- (n\cdot \nabla v,w)_{\partial \Omega_D} 
- (v, n\cdot \nabla w)_{\partial \Omega_D} 
+ \beta h^{-1} (v,w)_{\partial \Omega_D}
\\ \label{eq:sh}
s_{h,\mathrm{sym}}(v,w) & = \gamma h^{2p - 1} ([D^p_{n_F} v ], [D^p_{n_F} w ])_{\mcF_{D,h}}
\\ \label{eq:lh-sym}
l_{h,\mathrm{sym}}(v)
&= (f,v)_\Omega + (g_N,v)_{\partial \Omega_N}
- (g_D,n\cdot \nabla v)_{\partial\Omega_D} + \beta h^{-1}(g_D,v)_{\partial \Omega_D}  
\end{align}
where $\beta$ and $\gamma$ are positive parameters, $\mcF_{h,D}$ 
is the set of interior faces which belong to an element in 
$\mcT_h(\partial \Omega_D)$, and $D_{n_F} = n_F \cdot \nabla$ is the 
directional derivative normal to the face $F$. 

The stabilization term $s_{h,\mathrm{sym}}$ provides the control 
\begin{equation}\label{eq:stab-control}
\|\nabla v \|^2_{\mcT_h(\partial \Omega_D)} \lesssim \| \nabla v \|^2_{\Omega} + \| v \|^2_{s_{h,\mathrm{sym}}}
\qquad v \in V_{h}
\end{equation}
where we note that we indeed obtain control on the full elements $T \in \mcT_{h}(\partial \Omega_D)$. The control (\ref{eq:stab-control}) is employed in the proof 
of the coercivity of $A_h$ in the symmetric case. More precisely, (\ref{eq:stab-control}) is used as follows 
\begin{equation}
h \| n \cdot \nabla v \|^2_{\partial \Omega_D} 
\lesssim 
\|\nabla v \|^2_{\mcT_h(\partial \Omega_D)} 
\lesssim  \| \nabla v \|^2_{\Omega} + \| v \|^2_{s_{h,\mathrm{sym}}}
\end{equation} 
where we used an inverse inequality in the first estimate 

In the symmetric formulation we stabilize to ensure that 
coercivity holds and this stabilization also implies that the resulting linear system of equations is well conditioned. Therefore, in the symmetric case, we do not employ basis function removal on the Dirichlet boundary.
\end{rem}

\section{Error Estimates}

\subsection{Basic Properties of $\boldsymbol{A}_{\boldsymbol{h}}$}
\paragraph{Energy norm.}
Define the norms 
\begin{align}\label{eq:energy-norm}
\tn v \tn_h^2 &= \| \nabla v \|^2_{\Omega} 
+ h^{-1}\| v \|^2_{\partial \Omega_D} 
+ \tau h^2 \| \Delta v  \|^2_{\mcT_{h,D} \cap \Omega}
\\
 \label{eq:energy-norm-star}
 \tn v \tn_{h,\bigstar}^2 &= \| \nabla v \|^2_{\Omega} 
 + h^{-1}\| v \|^2_{\partial \Omega_D} 
 + \tau h^2 \| \Delta v  \|^2_{\mcT_{h,D} \cap \Omega}
 + h \| n \cdot \nabla v \|^2_{\partial \Omega_D}
\end{align}
% We have the bound
% \begin{equation}\label{eq:star-bound}
% \tn v \tn_{h,\bigstar} \lesssim \tn v \tn_h \qquad v\in V_h
% \end{equation}
% which follows from the inverse estimate
% \begin{equation}
% h\| n\cdot\nabla v\|_{\partial\Omega_D}^2 \lesssim \| \nabla v \|_\Omega^2  \qquad v\in V_h
% \end{equation}
% Assuming there exists some $h_0>0$ such that for $h \leq h_0$, $\meas(\partial\Omega_D\cap T) \lesssim h$ for all $T\in\mcT_h$,
% this inverse estimate can be established using a simple scaling argument. %, see \cite[Lemma 2.1, p.26]{thomee2006}.

\paragraph{Coercivity.} For $\beta>0$ the form $A_h$ is coercive 
\begin{equation}\label{eq:coercivity}
\tn v \tn_{h}^2 \lesssim A_h(v,v) \qquad v \in V + V_h 
\end{equation}
where $V = H^2(\Omega)$. This result follows directly from 
the definition and the fact that the parameters $\tau\geq 0$ 
and $\beta>0$.

\paragraph{Continuity.} The form $A_h$ is continuous 
\begin{equation}\label{eq:continuity}
A_h(v,w) \lesssim \tn v \tn_h
\tn w \tn_{h,\bigstar} \qquad v,w \in V + V_h 
\end{equation}

\begin{proof} First we note that 
\begin{align}
A_h(v,w) &= (\nabla v, \nabla w)_\Omega 
- (n\cdot \nabla v, w)_{\partial \Omega_D} 
\\ \nonumber 
&\qquad 
+ (v, n \cdot \nabla w )_{\partial \Omega_D}
+ \beta h^{-1} (v, w )_{\partial \Omega_D}
+ \tau h^2 (\Delta v, \Delta w )_{\mcT_{h,D} \cap \Omega}
\\ \label{eq:continuity-b}
&\lesssim |(\nabla v, \nabla w)_\Omega 
- (n\cdot \nabla v, w)_{\partial \Omega_D}|
+ \tn v\tn_h \tn w \tn_{h,\bigstar}
\end{align}
We proceed with an estimate of the first term on the right hand side. 
To that end let $\chi:\Omega \rightarrow [0,1]$ be a smooth function 
such that 
\begin{equation}
\begin{cases}
\chi = 1 \quad \text{on $\partial \Omega_D$}
\\
\supp (\chi ) \subset U_\delta(\partial \Omega_D ) 
\\ 
\| \nabla \chi \|_{L^\infty(U_\delta (\partial \Omega_D ))} \lesssim \delta^{-1}
\end{cases}
\end{equation}
where $U_\delta(\partial \Omega_\delta)$ is defined in (\ref{eq:Udelta}).
Splitting the term $(\nabla v, \nabla w)_\Omega$ using $\chi$ and then applying Green's formula for the term in the vicinity of $\partial \Omega_D$ followed 
by some obvious bounds give
\begin{align} \nonumber
&(\nabla v, \nabla w )_{\Omega} - (n\cdot \nabla v, w)_{\partial \Omega_D}
\\
&\qquad 
= (\nabla v, (1 - \chi) \nabla w )_{\Omega} 
+ (\nabla v, \chi \nabla w )_{\Omega} 
- (n\cdot \nabla v, \chi w)_{\partial \Omega_D}
\\
&\qquad 
= (\nabla v, (1 - \chi) \nabla w )_{\Omega} 
- (\nabla \cdot (\chi \nabla v) , w )_{\Omega} 
\\
&\qquad 
= (\nabla v, (1 - \chi) \nabla w )_{\Omega} 
- (\nabla \chi \cdot \nabla v , w )_{\Omega} 
- (\chi \Delta v , w )_{\Omega}
\\
&\qquad \lesssim  
\|\nabla v\|_\Omega \|\nabla w \|_{\Omega} 
+  \delta^{-1} \|\nabla v\|_{U_\delta(\partial \Omega_D)} 
\|w \|_{U_\delta (\partial \Omega_D)} 
+ \| \Delta v\|_{U_\delta(\partial \Omega_D)} \|w \|_{U_\delta(\partial \Omega_D)}
\end{align}
Next using the bound 
\begin{equation}
 \|w \|^2_{U_\delta(\partial \Omega_D)} 
 \lesssim 
\delta \| w \|^2_{\partial \Omega_D} + \delta^2 \| \nabla w \|^2_{U_\delta(\partial \Omega_D)}
\end{equation}
see \cite{BuHa18}, we conclude that 
\begin{align}
&(\nabla v, \nabla w )_{\Omega} 
 - (n\cdot \nabla v, w)_{\partial \Omega_D}
\\
&\qquad \lesssim 
 \|\nabla v\|_\Omega \|\nabla w \|_{\Omega} 
  +\delta^{-1} \|\nabla v\|_{U_\delta(\partial \Omega_D)} 
( \delta \| w \|^2_{\partial \Omega_D} 
+ \delta^2 \| \nabla w \|^2_{U_\delta(\partial \Omega_D)} )^{1/2}
\\
&\qquad \qquad 
+ \| \Delta v\|_{U_\delta(\partial \Omega_D)} 
( \delta \| w \|_{\partial \Omega_D} 
+ \delta^2 \| \nabla w \|^2_{U_\delta(\partial \Omega_D)} )^{1/2}
\\
&\qquad \lesssim 
 \|\nabla v\|_\Omega \|\nabla w \|_{\Omega} 
  +\|\nabla v\|_{U_\delta(\partial \Omega_D)} 
( \delta^{-1}\| w \|^2_{\partial \Omega_D} 
+ \| \nabla w \|^2_{U_\delta(\partial \Omega_D)} )^{1/2}
\\
&\qquad \qquad 
+ \delta \| \Delta v\|_{U_\delta(\partial \Omega_D)} 
( \delta^{-1} \| w \|_{\partial \Omega_D} 
+ \| \nabla w \|^2_{U_\delta(\partial \Omega_D)} )^{1/2}
\\
&\qquad \lesssim 
 (\|\nabla v\|^2_\Omega 
     + \|\nabla v\|^2_{U_\delta(\partial \Omega_D)}
     + \delta^2 \| \Delta v\|^2_{U_\delta(\partial \Omega_D)})^{1/2}
\\
&\qquad \qquad \times
(\| \nabla w \|^2_{\Omega} 
+ \delta^{-1} \| w \|^2_{\partial \Omega_D} 
+ \| \nabla w \|^2_{U_\delta(\partial \Omega_D)} )^{1/2}
\\
&\qquad \lesssim 
 (\|\nabla v\|^2_\Omega 
     + \delta^2 \| \Delta v\|^2_{U_\delta(\partial \Omega_D)})^{1/2}
\\
&\qquad \qquad \times
(\| \nabla w \|^2_{\Omega} 
+ \delta^{-1} \| w \|^2_{\partial \Omega_D}  )^{1/2}
\end{align}
Finally, choosing $\delta \sim h$ and using the fact that 
$U_\delta (\partial \Omega_D) \subset \mcT_{h,D}$ we obtain
% , and using an inverse bound 
% \begin{align}
% \delta^2 \| \Delta v\|^2_{U_\delta(\partial \Omega_D)} 
% &\lesssim 
% h^2\| \Delta v\|^2_{U_\delta(\partial \Omega_D) \cap \mcT_h(\partial \Omega) } 
% + 
% h^2 \| \Delta v\|^2_{U_\delta(\partial \Omega_D) \cap (\mcT_h \setminus \mcT_h (\partial \Omega) ) }  
% \\
% &\lesssim 
% h^2\| \Delta v\|^2_{\cap \mcT_h(\partial \Omega) \cap \Omega} 
% + 
% \| \nabla v\|^2_{U_\delta(\partial \Omega_D) \cap (\mcT_h \setminus \mcT_h (\partial \Omega) ) }
% \end{align}
\begin{equation}
(\nabla v, \nabla w )_{\Omega} 
 - (n\cdot \nabla v, w)_{\partial \Omega_D}
 \lesssim 
 \tn v \tn_h   \tn w \tn_{h,\bigstar} 
\end{equation}
which in combination with (\ref{eq:continuity-b}) concludes the proof.
\end{proof}

\subsection{Interpolation Error Estimates}
\label{section:interpolation}
\paragraph{Definition of the Interpolant.} There is an
extension operator $E:W^k_q(\Omega) \rightarrow W^k_q(\IR^d)$, $k\geq 0$ and $q\geq 1$, 
such that 
\begin{equation}
\| E v \|_{W^k_q(\IR^d)} \lesssim \| v \|_{W^k_q(\Omega)}
\end{equation}
see \cite{Fo95}. Define the interpolant by 
\begin{equation}\label{eq:interpolant}
\pi_{h} : H^s(\Omega) \ni u \mapsto \pi_{Cl,h} ( E u ) \in V_h  
\end{equation} 
where $\pi_{Cl,h}$ is a Clement type interpolation operator 
onto the spline space. We have the expansion
\begin{equation}
 \pi_h ( E v ) = \sum_{\varphi_i \in I} (\pi_h ( E v ))_i \varphi_i
\end{equation}
where $(\pi_h ( E v ))_i$ is the coefficient corresponding to basis function 
$\varphi_i$. We define the interpolant on the active and removed finite element spaces by
\begin{equation}\label{eq:piha}
\pi_{h,a} v = \sum_{\varphi_i \in I_a}  (\pi_h (E v) )_i \varphi_i 
\end{equation}
and 
\begin{equation}\label{eq:pihr}
\pi_{h,r} v = \sum_{\varphi_i \in I_r}  (\pi_h (E v) )_i \varphi_i 
\end{equation}
We then have 
\begin{equation}\label{eq:interpol-split}
\pi_h (E v) = \pi_{h,a} ( E v ) + \pi_{h,r} (E v ) 
\end{equation}
Below we simplify the notation and write $v = Ev$ and 
$\pi_h ( E v ) = \pi_h v$.

\paragraph{Basis Function Removal Condition.} Let $B_r$, 
with corresponding index set $I_r$, be such that
\begin{align}\label{eq:condition}
\sum_{i \in I_r} \tn \varphi_i \tn^2_{h,\bigstar} 
\lesssim tol^2
\end{align}

\paragraph{Selection Procedure.} To determine $B_r$ we may thus compute 
$\tn \varphi_i \tn_{h,\bigstar}$, $i \in I$, sort the basis functions in increasing order and then simply add functions to $I_r$ as long 
as (\ref{eq:condition}) is satisfied. If we wish to avoid computing 
$\tn \varphi_i \tn_{h,\bigstar}$ we may use the directly available diagonal values $A_h(\varphi_i,\varphi_i)$ of the stiffness matrix 
as approximations.

\begin{lem} \label{lem:interpol}
{\bf (Interpolation Error Estimate)} Let $\pi_{h,a}$ be defined by 
(\ref{eq:piha}) with $B = B_a \cup B_r$ such 
that $B_r$ satisfies (\ref{eq:condition}), then 
\begin{align}\label{eq:interpol-errorest-star}
\tn v  - \pi_{h,a} v \tn_{h,\bigstar} 
&\lesssim 
(h^{p} + tol) \| v \|_{H^{p+1}(\Omega)} 
\end{align}
\end{lem}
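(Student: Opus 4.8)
The plan is to split the error via the triangle inequality into a standard interpolation error for the full spline quasi-interpolant and a contribution coming only from the removed basis functions. Using (\ref{eq:interpol-split}), $\pi_h v = \pi_{h,a} v + \pi_{h,r} v$, so
\begin{equation*}
\tn v - \pi_{h,a} v \tn_{h,\bigstar} \le \tn v - \pi_h v \tn_{h,\bigstar} + \tn \pi_{h,r} v \tn_{h,\bigstar},
\end{equation*}
and it suffices to estimate the two terms on the right separately.

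For the first term I would invoke the standard approximation properties of the Clement-type spline quasi-interpolant $\pi_h$ together with the stability $\|Ev\|_{H^{p+1}(\IR^d)}\lesssim\|v\|_{H^{p+1}(\Omega)}$ of the extension operator. Going termwise through the definition (\ref{eq:energy-norm-star}) of $\tn\cdot\tn_{h,\bigstar}$: the bulk terms $\|\nabla(v-\pi_h v)\|_\Omega$ and $\tau^{1/2}h\|\Delta(v-\pi_h v)\|_{\mcT_{h,D}\cap\Omega}$ are bounded by $h^p\|v\|_{H^{p+1}(\Omega)}$ directly from the interpolation estimates (for $p\ge 2$, for which the $\Delta$-term is meaningful); the two boundary terms $h^{-1/2}\|v-\pi_h v\|_{\partial\Omega_D}$ and $h^{1/2}\|n\cdot\nabla(v-\pi_h v)\|_{\partial\Omega_D}$ are first reduced to bulk norms on $\mcT_{h,D}$ via the standard cut-element trace inequality $\|w\|^2_{\partial\Omega\cap T}\lesssim h^{-1}\|w\|^2_T+h\|\nabla w\|^2_T$ and then estimated by the same interpolation bounds. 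This gives $\tn v-\pi_h v\tn_{h,\bigstar}\lesssim h^p\|v\|_{H^{p+1}(\Omega)}$; this is the routine part.

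The essential new ingredient is the second term. I would first use the finite overlap of the B-spline supports — at most $(p+1)^d$ basis functions are nonzero at any point — so that the $\bigstar$-norm, being built solely from $L^2$-type quantities of $w$, $\nabla w$, $\Delta w$ and boundary traces, behaves subadditively on the expansion $\pi_{h,r}v=\sum_{i\in I_r}(\pi_h v)_i\varphi_i$:
\begin{equation*}
\tn\pi_{h,r}v\tn^2_{h,\bigstar}\lesssim\sum_{i\in I_r}|(\pi_h v)_i|^2\,\tn\varphi_i\tn^2_{h,\bigstar}.
\end{equation*}
Next I would bound the quasi-interpolation coefficients by a local sup-norm, $|(\pi_h v)_i|\lesssim\|v\|_{L^\infty(\omega_i)}$, where $\omega_i$ is the mesh-sized patch on which the functional defining $(\pi_h v)_i$ acts — a standard property of Clement-type quasi-interpolants (their coefficient functionals are bounded local averages). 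Pulling $\|v\|_{L^\infty}$ out of the sum, invoking the removal condition (\ref{eq:condition}) on the remaining factor $\sum_{i\in I_r}\tn\varphi_i\tn^2_{h,\bigstar}$, and finally using the Sobolev embedding $H^{p+1}\hookrightarrow L^\infty$ (valid for $d\le 3$, $p\ge 1$) together with boundedness of $E$ yields $\tn\pi_{h,r}v\tn_{h,\bigstar}\lesssim tol\,\|v\|_{H^{p+1}(\Omega)}$. Combining the two bounds proves (\ref{eq:interpol-errorest-star}).

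I expect the main obstacle to be precisely this last step: one must avoid the naive $L^2$-stability bound $|(\pi_h v)_i|\lesssim h^{-d/2}\|v\|_{L^2(\omega_i)}$, which would leave a spurious factor $h^{-d/2}$, and instead combine the concentrated $L^\infty$ coefficient bound with the finite-overlap structure so that, after extracting $\|v\|_{L^\infty}$, the only spline-dependent quantity that survives is exactly $\sum_{i\in I_r}\tn\varphi_i\tn^2_{h,\bigstar}$, i.e.\ the quantity controlled by the removal criterion (\ref{eq:condition}).
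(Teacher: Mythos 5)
Your proposal is correct and follows essentially the same route as the paper: the same triangle-inequality split via $\pi_h v = \pi_{h,a}v + \pi_{h,r}v$, the standard spline interpolation estimate for the first term, and for the second term the finite-overlap quasi-orthogonality of the B-spline supports combined with an $L^\infty$ bound on the coefficients, the removal condition \eqref{eq:condition}, and the Sobolev embedding $H^{p+1}\hookrightarrow L^\infty$. The only cosmetic difference is that you bound the coefficients directly by $\|v\|_{L^\infty(\omega_i)}$ through the Clement coefficient functionals, whereas the paper bounds them by $\|\pi_h v\|_{L^\infty(\mcN_h(\Omega))}$ and then invokes the $L^\infty$-stability of $\pi_h$ and of the extension operator; these are equivalent.
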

\begin{proof} Using the identity 
$\pi_h v = \pi_{h,a} v + \pi_{h,r} v$
and the triangle inequality
\begin{align}
\tn v - \pi_{h,a} v \tn^2_{h,\bigstar} 
&\lesssim 
\tn v - \pi_{h} v \tn^2_{h,\bigstar} + \tn \pi_{h,r} v \tn^2_{h,\bigstar}
\\
&\lesssim 
h^{2p} \| v \|^2_{H^{p+1}(\Omega)} + \tn \pi_{h,r} v \tn^2_{h,\bigstar}
\end{align}
by standard spline interpolation results \cite{IGA-h-refined}. To estimate the second term on the right hand side we introduce the scalar product 
\begin{equation}
\langle v,w \rangle_{h,\bigstar} 
= (\nabla v, \nabla w)_\Omega 
+ h (n\cdot \nabla v, n\cdot \nabla w)_{\partial \Omega_D} 
+ h^{-1} (v, w)_{\partial \Omega_D}
+ h^2(\Delta v,\Delta w)_{\mcT_{h,D} \cap \Omega}
\end{equation}
associated with the norm $\tn \cdot \tn_{h,\bigstar}$. Expanding $\pi_{h,r} v$ in the 
basis $B_r$ we get 
\begin{align}
\tn \pi_{h,r} v \tn^2_{h,\bigstar} 
&=\sum_{i,j\in I_r} (\pi_h v)_i (\pi_h v)_j \langle \varphi_i,\varphi_j \rangle_{h,\bigstar}
\\
&\leq \sum_{i \in I_r} \sum_{j\in I_r} \delta_{ij} |(\pi_h v)_i|\, |(\pi_h v)_j| \, \tn \varphi_i \tn_{h,\bigstar}  \tn \varphi_j \tn_{h,\bigstar}  
\\
&\leq \sum_{i \in I_r} \sum_{j\in I_r} \frac{\delta_{ij}}{2} |(\pi_h v)_i|^2  \tn \varphi_i \tn_{h,\bigstar}
+
\frac{\delta_{ij}}{2} |(\pi_h v)_j|^2  \tn \varphi_j \tn_{h,\bigstar}
\\
&= 
\sum_{i \in I_r} \left( \sum_{j\in I_r} \delta_{ij} \right) |(\pi_h v)_i|^2  \tn \varphi_i \tn^2_{h,\bigstar} 
\\
&\lesssim 
 \| \pi_h v \|^2_{L^{\infty}(\mcN_h(\Omega))} 
 \left(  \sum_{i \in I_r}   \tn \varphi_i \tn^2_{h,\bigstar}  \right)
 \\
&\lesssim 
 \| v \|^2_{H^{p+1}(\Omega)} tol^2 
\end{align}
Here 
\begin{itemize}
\item We defined 
\begin{equation}
\delta_{ij} =
\begin{cases}
1 & \text{if $\supp(\varphi_i) \cap \supp(\varphi_j) \neq \emptyset$}
\\
0 & \text{if $\supp(\varphi_i) \cap \supp(\varphi_j) = \emptyset$}
\end{cases}
\end{equation}
and we have the bound 
\begin{equation}
\sum_{j \in I_r} \delta_{ij}
\leq (2p+1)^d
\end{equation}

\item We used the $L^\infty(\mcN_h(\Omega))$ stability of the interpolant 
$\pi_h$ and then the $L^\infty$ stability of the extension 
operator and finally the Sobolev embedding theorem
\begin{equation}
 \| \pi_h v \|_{L^{\infty}(\mcN_h(\Omega))} 
 \lesssim
  \| v \|_{L^{\infty}(\mcN_h(\Omega))}
  \lesssim
  \| v \|_{L^{\infty}(\Omega)}   
  \lesssim 
  \| v \|_{H^{p+1}(\Omega)}
\end{equation}
\end{itemize}
\end{proof}

\subsection{Error Estimate} 

We have the following error estimate.

\begin{thm} Let $u_{h,a}$ be the solution to (\ref{eq:method-nonsym}) with $V_{h,a} = \Span\{B_a\}$ the active spline space, $V_h = \Span\{B\}$ the full spline space, and $B = B_a \cup B_r$, where $B_r$ satisfies (\ref{eq:condition}) with $tol\sim h^p$ , then 
\begin{equation}\label{eq:error-estimate-energy}
\tn u - u_{h,a} \tn_h \lesssim h^{p} \| u \|_{H^{p+1}(\Omega)} 
\end{equation}
\end{thm}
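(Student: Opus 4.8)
The plan is a standard C\'ea/Strang-type argument built on coercivity (\ref{eq:coercivity}), continuity (\ref{eq:continuity}), Galerkin orthogonality (\ref{eq:galort}), and the interpolation bound of Lemma~\ref{lem:interpol}. Split the error through the active interpolant,
\[
u - u_{h,a} = (u - \pi_{h,a} u) + (\pi_{h,a} u - u_{h,a}) =: \rho + e_h , \qquad e_h \in V_{h,a} ,
\]
so that $\tn u - u_{h,a}\tn_h \le \tn\rho\tn_h + \tn e_h\tn_h$. For the first term, Lemma~\ref{lem:interpol} with $tol \sim h^p$, together with the trivial bound $\tn\cdot\tn_h \le \tn\cdot\tn_{h,\bigstar}$, gives $\tn\rho\tn_h \lesssim (h^p + tol)\|u\|_{H^{p+1}(\Omega)} \lesssim h^p\|u\|_{H^{p+1}(\Omega)}$, where we invoke the regularity assumption (\ref{eq:regularity}) with $s = p+1$. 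It remains to bound the discrete error $\tn e_h\tn_h$.

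To that end, use coercivity, the decomposition $e_h = -\rho + (u - u_{h,a})$, the Galerkin orthogonality $A_h(u - u_{h,a}, v) = 0$ for $v \in V_{h,a}$ (cf.\ (\ref{eq:galort})) taken with $v = e_h$ (which annihilates the second contribution), and finally continuity:
\[
\tn e_h\tn_h^2 \;\lesssim\; A_h(e_h, e_h) \;=\; -A_h(\rho, e_h) + A_h(u - u_{h,a}, e_h) \;=\; -A_h(\rho, e_h) \;\lesssim\; \tn\rho\tn_h \, \tn e_h\tn_{h,\bigstar} .
\]
What remains is to absorb the factor $\tn e_h\tn_{h,\bigstar}$ into $\tn e_h\tn_h$. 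Since $\tn e_h\tn_{h,\bigstar}^2 = \tn e_h\tn_h^2 + h\|n\cdot\nabla e_h\|^2_{\partial\Omega_D}$ and $e_h$ is a spline, it suffices to establish the inverse-type estimate $h\|n\cdot\nabla e_h\|^2_{\partial\Omega_D} \lesssim \|\nabla e_h\|^2_\Omega + \tau h^2\|\Delta e_h\|^2_{\mcT_{h,D}\cap\Omega} \le \tn e_h\tn_h^2$, obtained from a trace inequality on the layer of elements $\mcT_{h,D}$ meeting $\partial\Omega_D$ combined with the control of $\Delta e_h$ supplied by the element-wise least-squares stabilization in $A_h$ (this is precisely the purpose of that term, cf.\ the introduction). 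Granting $\tn e_h\tn_{h,\bigstar} \lesssim \tn e_h\tn_h$, the displayed chain yields $\tn e_h\tn_h \lesssim \tn\rho\tn_h \lesssim h^p\|u\|_{H^{p+1}(\Omega)}$, and the triangle inequality finishes the proof. Alternatively, one may avoid the inverse estimate and instead re-examine $A_h(\rho, e_h)$ term by term: the only delicate contribution, $(\rho, n\cdot\nabla e_h)_{\partial\Omega_D}$, can be converted by Green's formula against a cut-off $\chi$ supported in $U_\delta(\partial\Omega_D)$ (as in the proof of continuity) into volume integrals over the $O(h)$-strip $U_\delta(\partial\Omega_D)$, where the $L^2$-smallness of $\rho$ and the control of $\|\Delta e_h\|_{\mcT_{h,D}\cap\Omega}$ again produce the bound $\lesssim h^p\|u\|_{H^{p+1}(\Omega)}\,\tn e_h\tn_h$.

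The one genuinely delicate step is the control of the normal flux $h\|n\cdot\nabla e_h\|^2_{\partial\Omega_D}$ of the discrete error: unlike for conforming or symmetric-Nitsche discretizations, the nonsymmetric energy norm $\tn\cdot\tn_h$ does not contain this quantity, so one must exploit the least-squares stabilization together with the spline structure of $e_h$, and must check that the required trace and inverse estimates survive on the cut elements in the vicinity of $\partial\Omega_D$. Everything else — coercivity, continuity, the consistency identity $A_h(u, v) = L_h(v)$ underpinning Galerkin orthogonality, and the interpolation estimate with the basis-removal error absorbed into $tol \sim h^p$ — is already available from the preceding sections, so the rest is essentially bookkeeping.
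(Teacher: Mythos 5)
Your overall scaffolding (coercivity, Galerkin orthogonality, continuity, Lemma~\ref{lem:interpol}) is right, but the decomposition $u-u_{h,a}=\rho+e_h$ with coercivity applied to the discrete part $e_h$ creates exactly the obstruction you flag at the end, and your proposed resolution does not hold. The chain $\tn e_h\tn_h^2\lesssim A_h(e_h,e_h)=-A_h(\rho,e_h)\lesssim\tn\rho\tn_h\tn e_h\tn_{h,\bigstar}$ leaves you needing $h\|n\cdot\nabla e_h\|^2_{\partial\Omega_D}\lesssim\tn e_h\tn_h^2$, i.e.\ an inverse/trace estimate bounding the normal flux on $\partial\Omega_D$ by norms taken only over $T\cap\Omega$. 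On cut elements this is precisely the estimate that fails when the intersection $T\cap\Omega$ is small relative to $\partial\Omega_D\cap T$: the cut trace inequality controls $\|n\cdot\nabla e_h\|^2_{\partial\Omega_D\cap T}$ by norms over the \emph{full} element $T$, and passing from $T$ to $T\cap\Omega$ requires ghost-penalty-type stabilization such as $s_{h,\mathrm{sym}}$ in \eqref{eq:sh} — which the nonsymmetric method deliberately omits. The paper states explicitly (introduction and the remark on the symmetric method) that this bound is not improved by spline regularity and is not enforced by basis function removal; the removal criterion \eqref{eq:condition} is a global smallness condition on the discarded functions and gives no lower bound on the cut configuration of the functions you keep. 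The element-wise least-squares term controls $\Delta e_h$ on $\mcT_{h,D}\cap\Omega$ only, which is not enough to recover the full-element gradient. Your ``alternative'' Green's-formula route is closer to being salvageable, but as sketched it silently requires an $L^2$ bound $\|\pi_{h,r}u\|_{U_\delta(\partial\Omega_D)}\lesssim h^{p+1}$ on the removed part in the boundary strip, which is not what Lemma~\ref{lem:interpol} provides and would need a separate argument (e.g.\ via the strip estimate $\|w\|^2_{U_\delta}\lesssim\delta\|w\|^2_{\partial\Omega_D}+\delta^2\|\nabla w\|^2_{U_\delta}$ applied to $\pi_{h,r}u$).

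The paper avoids all of this by not splitting the error: it applies coercivity directly to $u-u_{h,a}$, uses Galerkin orthogonality to write $A_h(u-u_{h,a},u-u_{h,a})=A_h(u-u_{h,a},u-\pi_{h,a}u)$, and then invokes continuity \eqref{eq:continuity} with the discrete error in the \emph{first} slot (which only needs $\tn\cdot\tn_h$) and the interpolation error in the \emph{second} slot (which needs $\tn\cdot\tn_{h,\bigstar}$ — exactly the quantity Lemma~\ref{lem:interpol} bounds). Dividing by $\tn u-u_{h,a}\tn_h$ finishes the proof with no inverse estimate on discrete functions at all. This asymmetric placement of the two norms in the continuity bound is the whole point of the nonsymmetric Nitsche formulation here; your argument should be reorganized along these lines.
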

\begin{proof} 
Using coercivity (\ref{eq:coercivity}), 
Galerkin orthogonality (\ref{eq:galort}), 
and continuity (\ref{eq:continuity}), we 
obtain
\begin{align}
\tn u - u_{h,a} \tn^2_h 
&\lesssim 
A_h( u - u_{h,a}, u - u_{h,a} ) 
\\
&= 
A_h( u - u_{h,a}, u - \pi_{h,a} u ) 
\\
&\lesssim 
\tn u - u_{h,a} \tn_{h} \tn u - \pi_{h,a} u \tn_{h,\bigstar}
\end{align}
Thus we arrive at 
\begin{equation}
\tn u - u_{h,a} \tn_h 
\lesssim 
\tn u - \pi_{h,a} u \tn_{h,\bigstar}
\end{equation}
which together with the interpolation error estimate 
\eqref{eq:interpol-errorest-star} completes the proof of \eqref{eq:error-estimate-energy}.

% Adding and subtracting an interpolant and using the 
% triangle inequality
% \begin{equation}
% \tn u - u_{h,a} \tn_h 
% \leq \tn u - \pi_{h,a} u \tn_h + \tn \pi_h u - u_{h,a} \tn_h
% \end{equation}
% Here the second term is bounded using coercivity, Galerkin orthogonality and continuity
% \begin{align}
% \tn \pi_{h,a} u - u_{h,a} \tn^2_h 
% &\lesssim 
% A_h( \pi_{h,a} u - u_{h,a}, \pi_{h,a} u - u_{h,a} ) 
% \\
% &= 
% A_h( \pi_{h,a} u - u, \pi_{h,a} u - u_{h,a} ) 
% \\
% &\lesssim 
% \tn \pi_{h,a} u - u\tn_{h} \tn_h \pi_{h,a} u - u_{h,a} \tn_{h,\bigstar}
% \\
% &\lesssim 
% \tn \pi_{h,a} u - u\tn_{h,\bigstar} \tn  \pi_{h,a} u - u_{h,a} \tn_{h}
% \end{align}
% where the bound \eqref{eq:star-bound} is used in the last inequality.
% Thus 
% \begin{equation}
% \tn \pi_{h,a} u - u_{h,a} \tn_h 
% \lesssim 
% \tn \pi_{h,a} u - u \tn_{h,\bigstar}
% \end{equation}
% which together with the interpolation error estimates
% \eqref{eq:interpol-errorest} and \eqref{eq:interpol-errorest-star} completes the proof of 
% \eqref{eq:error-estimate-energy}.
\end{proof}

\begin{rem} Note that if we take $\tau=0$, i.e. we use the 
method without least squares stabilization in the vicinity of the Dirichlet boundary. We may still derive an error estimate 
as follows
\begin{align}
\| \nabla ( u - u_{h,a}) \|^2_\Omega + \| u - u_h \|^2_{\partial \Omega_D}
&\lesssim A_h(u - u_{h,a}, u - u_{h,a} ) 
\\
&= 
A_h( u - u_{h,a}, u - \pi_{h,a} u ) 
\\
&\lesssim 
\tn u - u_{h,a} \tn_{h} \tn u - \pi_{h,a} u \tn_{h,\bigstar}
\end{align}
Now we  note that 
\begin{align}
\tn u - u_{h,a} \tn^2_{h} 
&=
\| \nabla ( u - u_{h,a}) \|^2_\Omega + \| u - u_h \|^2_{\partial \Omega_D}
+ h^2 \| \Delta( u - u_{h,a} ) \|^2_{\mcT_{h,D}\cap \Omega}
\\
&=\| \nabla ( u - u_{h,a}) \|^2_\Omega + \| u - u_h \|^2_{\partial \Omega_D}
+ h^2 \| f - \Delta u_{h,a}  \|^2_{\mcT_{h,D}\cap \Omega}
\end{align}
and thus we obtain the bound
\begin{align}
\| \nabla ( u - u_{h,a}) \|^2_\Omega + \| u - u_h \|^2_{\partial \Omega_D}
&\lesssim
h^{2p} \| u \|^2_{H^{p+1}(\Omega)}
+
h^2 \| f - \Delta u_{h,a}  \|^2_{\mcT_{h,D}\cap \Omega}
\end{align}
where the second term on the right hand side is a residual term involving the computed solution $u_h$. The resulting bound is thus 
of a priori - a posteriori type. One may estimate the residual term 
on elements in the interior of $\Omega$ but for elements which 
are cut we do not have access to the required inverse estimate. 
\end{rem}

\subsection{Bounds in Terms of the Geometry of the Cut Elements}

\begin{figure}
\centering
\begin{subfigure}[t]{.24\linewidth}
\includegraphics[width=\linewidth]{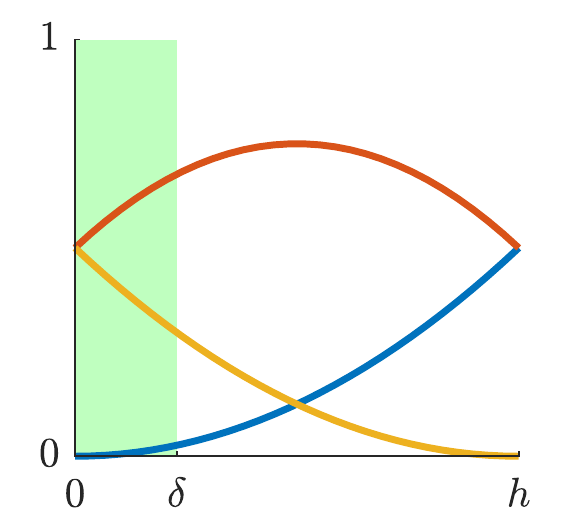}
\caption{$C^1 Q^2$ basis}
\end{subfigure}\
\begin{subfigure}[t]{.24\linewidth}
\includegraphics[width=\linewidth]{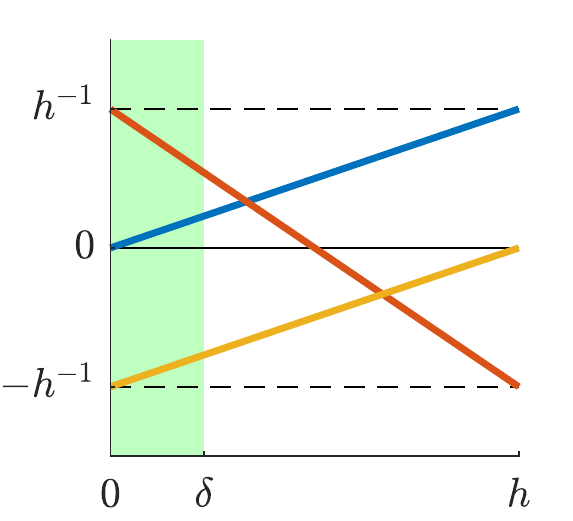}
\caption{$C^1 Q^2$ gradient}
\end{subfigure}\
\begin{subfigure}[t]{.24\linewidth}
\includegraphics[width=\linewidth]{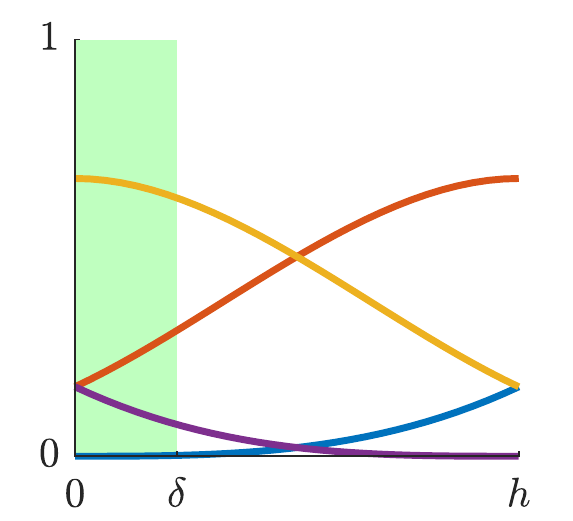}
\caption{$C^2 Q^3$ basis}
\end{subfigure}\
\begin{subfigure}[t]{.24\linewidth}
\includegraphics[width=\linewidth]{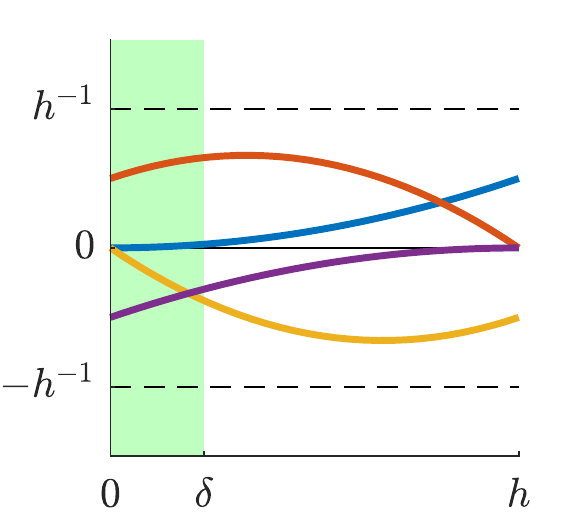}
\caption{$C^2 Q^3$ gradient}
\end{subfigure}

\begin{subfigure}[t]{.24\linewidth}
\includegraphics[width=\linewidth]{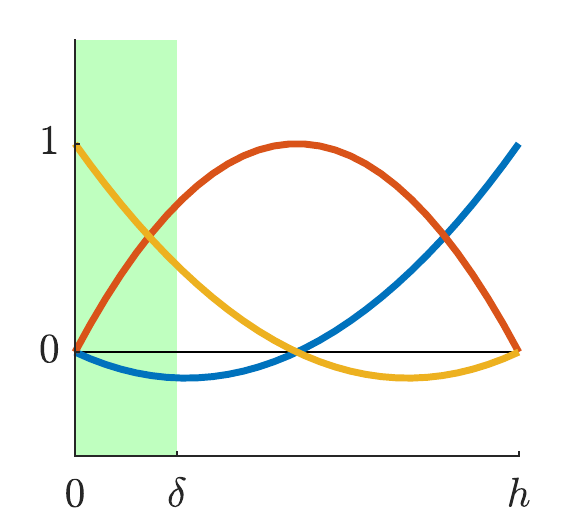}
\caption{$Q^2$ basis}
\end{subfigure}\
\begin{subfigure}[t]{.24\linewidth}
\includegraphics[width=\linewidth]{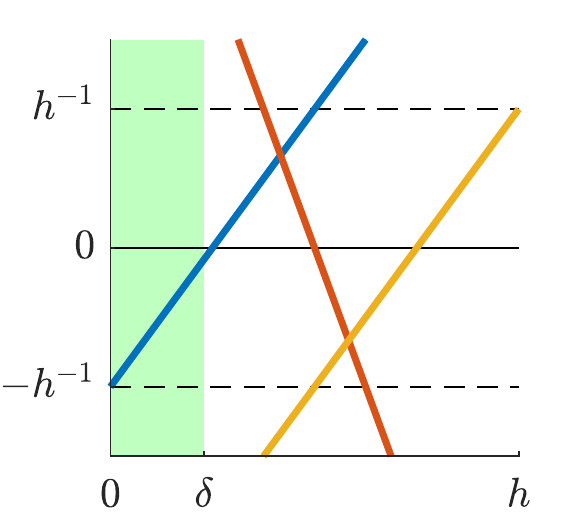}
\caption{$Q^2$ gradient}
\end{subfigure}\
\begin{subfigure}[t]{.24\linewidth}
\includegraphics[width=\linewidth]{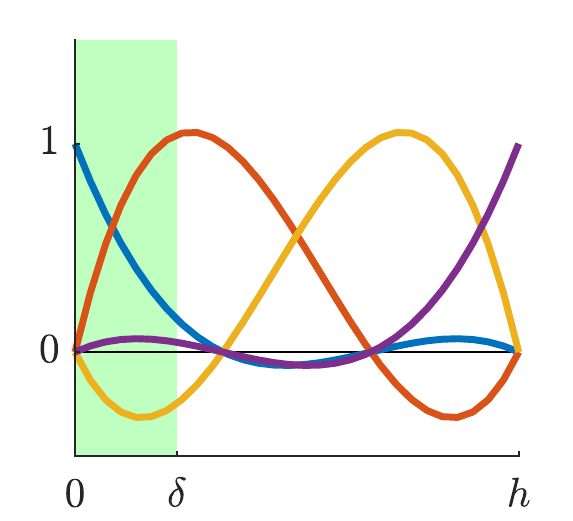}
\caption{$Q^3$ basis}
\end{subfigure}\
\begin{subfigure}[t]{.24\linewidth}
\includegraphics[width=\linewidth]{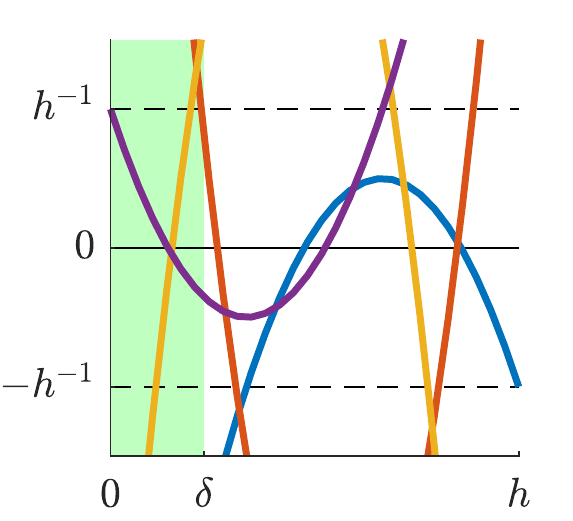}
\caption{$Q^3$ gradient}
\end{subfigure}

\caption{B-spline (top row) and Lagrange (bottom row) basis functions of order $p=2,3$ in a $1D$ element intersecting $\Omega$. Note that gradient of the blue B-spline basis functions is $O(h^{-1}(\frac{\delta}{h})^{p-1})$ within $\Omega$ while the gradient of Lagrange basis functions is $O(h^{-1})$ regardless of $p$.}
\label{fig:spline-vs-lagrange}
\end{figure}

In this section we derive a criterion in terms of the 
geometry of the cut support of the basis function which 
implies (\ref{eq:condition}). 
This criterion will in general not be used in practice 
but it provides insight into the effect of the higher order 
regularity of the B-splines. 

Assuming that there are $h^{-(d-1)}$ such elements we 
have the estimate 
\begin{equation}
\sum_{i \in I_r} \tn \varphi_i \tn^2_{h,\bigstar} 
\lesssim h^{-(d-1)} \max_{i \in I_r } 
\tn \varphi_i \tn^2_{h,\bigstar} 
\end{equation}
and setting $tol\sim h^p$ we get 
\begin{equation}
\max_{i \in I_r } 
\tn \varphi_i \tn^2_{h,\bigstar} 
\lesssim 
h^{d-1} tol \lesssim h^{2p + d -1} 
\end{equation}
and we may define $B_r$ as all basis functions $\varphi \in B$ 
such that 
\begin{equation}
\tn \varphi \tn^2_{h,\bigstar} 
\lesssim 
h^{d-1} tol \lesssim h^{2p + d -1} 
\end{equation}

Let us for simplicity consider a basis function $\varphi$ 
such that $\supp(\varphi) \subset \partial \Omega_D = 
\emptyset$, i.e. a basis function that reside on the Neumann 
part of the boundary. In this case 
$
\tn \varphi \tn_{h,\bigstar} 
= \| \nabla \varphi \|_{\supp(\varphi) \cap \Omega}
$
and thus $\varphi \in B_r$ if 
\begin{equation}\label{eq:condition-grad}
\| \nabla \varphi \|^2_{\supp(\varphi) \cap \Omega} 
\lesssim h^{2p + d -1} 
\end{equation}

\paragraph{The 1D Case: Energy Norm.} Let $\Omega = [0,1]$ 
and consider a basis function $\varphi$ with support $[X_0, X_1]$ 
such that 
$X_0 \in [0,1]$ and $\supp(\varphi) \cap [0,1] = [X_0,1]$ 
is an interval of length $\delta$. Then for $\delta$ small 
enough we have 
\begin{equation}
\varphi(x) = \left( \frac{x}{h} \right)^p, 
\qquad
|D \varphi (x)|^2 =  \frac{p^2}{h^2}\left( \frac{x}{h} \right)^{2(p-1)}
\end{equation}
up to constants and in local coordinates with origo $X_0$, 
and 
\begin{equation}
\| D \varphi \|^2 
= \int_0^\delta 
\frac{p^2}{h^2}\left( \frac{x}{h} \right)^{2(p-1)}
= \frac{p}{h} \frac{p}{2p-1}\left(\frac{\delta}{h}\right)^{2p - 1}
\end{equation}
Condition (\ref{eq:condition-grad}) thus takes the form
\begin{equation}\label{eq:condition-1D-B-splines}
\frac{p}{h} \frac{p}{2p-1} 
\left(\frac{\delta}{h}\right)^{2p - 1} 
\lesssim h^{2p + d - 1} \quad  \Longrightarrow \quad
\frac{\delta}{h} 
\lesssim 
%\frac{1}{p^{\frac{1}{2p-1}}}
h^{\frac{2p + 1}{2p - 1}}
\end{equation}
For Lagrange basis functions we instead have $|D\varphi(x)| 
\sim h^{-1}$ and we therefore obtain the condition
\begin{equation}\label{eq:condition-1D-Lagrange}
\delta h^{-2} \lesssim h^{2p + d - 1} 
\quad  \Longrightarrow \quad
\frac{x}{\delta} 
\lesssim 
h^{2p + 1}
\end{equation}
An illustration of both B-spline and Lagrange basis functions in this setting is given in Figure~\ref{fig:spline-vs-lagrange}.
Comparing (\ref{eq:condition-1D-B-splines}) and 
(\ref{eq:condition-1D-Lagrange}) we note that the condition 
is much stronger for the Lagrange functions and higher 
order $p$.

\paragraph{The 1D Case: Max Norm.}
The difference between the B-splines and Lagrange basis 
functions is even more drastic if we consider instead evaluating 
the max norm of the derivative. Then for B-splines we have 
\begin{equation}
\| D \varphi \|_{L^\infty(\supp(\varphi) \cap \Omega)} 
\lesssim h^{-1} \left( \frac{\delta}{h} \right)^{p-1}
\end{equation}
while for Lagrange basis functions
\begin{equation}
\| D \varphi \|_{L^\infty(\supp(\varphi) \cap \Omega)} \lesssim h^{-1}
\end{equation}
which in the latter case can not be controlled by decreasing $\delta$, see Figure~\ref{fig:spline-vs-lagrange}. 
Thus for Lagrange basis functions we get a pointwise error 
of order $h^{-1}$ if we remove a basis function while for quadratic and higher order B-splines we may retain optimal order local 
accuracy by choosing
\begin{equation}
\frac{\delta}{h} \lesssim h^{\frac{p+1}{p-1}}
\end{equation}

\paragraph{The 2D Case: Energy Norm.} 
We now extend our calculation to the 2D case. The higher dimensional case can be handled using a similar approach. Let $X_0$ be a vertex 
of $\supp(\varphi)$ which reside in the interior of $\Omega$. Let $\{e_i\}_{i=1}^d$ be an orthonormal coordinate system centered at $X_0$ and with basis vectors $e_i$, and coordinates $x_i$, aligned with 
the edges $\{E_i\}_{i=1}^d$ of $\supp(\varphi)$ which originates 
at $X_0$, see Figure~\ref{fig:delta-illustration}.
%paragraph{Bound for the Energy Norm of a Basis Function that 
% Cuts the Boundary.} Here we develop an estimate which will be 
% used in the forthcoming interpolation error estimate, see Lemma 
% \ref{lem:interpol}, and will also lead to a quantitative 
% criterion, see (\ref{eq:condition}) for identifying basis 
% function with sufficiently small intersection $\supp(\varphi) \cap 
% \Omega$ with the domain.
% Let us for clarity consider the $2D$ case. Assume that 
% \begin{equation}
% |\supp(\varphi) \cap \Omega| \leq c h^2
% \end{equation}
% for a small constant $c$. 
%  We shall now estimate the quantity 
% \begin{equation}
% \| \nabla \varphi \|_{\supp(\varphi) \cap \Omega}
% \end{equation}
%
Using the local coordinates in the vicinity of $X_0$ we have 
the expansions 
\begin{equation}\label{eq:expansion}
\varphi(x_1,x_2) = \left( \frac{x_1}{h} \right)^p \left(\frac{x_2}{h}\right)^p 
\end{equation}
\begin{equation}\label{eq:expansion-der}
|\nabla \varphi(x_1,x_2)|^2 
= 
\frac{1}{h^2} \left( \frac{x_1}{h} \right)^{2p-2} \left(\frac{x_2}{h}\right)^{2p} 
+ 
 \frac{1}{h^2} \left( \frac{x_1}{h} \right)^{2p} \left(\frac{x_2}{h}\right)^{2p-2} 
\end{equation}
Let $\delta_i = \| X_i - X_0 \|_{\IR^d}$ be the distance from 
the vertex $X_0$ to the intersection $X_i$ of edge $E_i$ with 
the boundary  $\partial \Omega$. Assume that 
\begin{equation}\label{eq:assumption-box}
\supp(\varphi) \cap \Omega \subset [0,\delta_1] \times [0,\delta_2]
\end{equation}
Integrating over $[0,\delta_1] \times [0,\delta_2]$ we obtain
\begin{align}\label{eq:supp-a}
\int_0^{\delta_1} \int_0^{\delta_2} |\nabla \varphi |^2 
&\lesssim 
\left( \frac{\delta_1}{h} \right)^{2p-1} \left(\frac{\delta_2}{h}\right)^{2p+1} 
+ 
 \left( \frac{\delta_1}{h} \right)^{2p+1} \left(\frac{\delta_2}{h}\right)^{2p-1}  
\end{align}
Condition (\ref{eq:condition-grad}) thus takes the form
\begin{equation}
\left( \frac{\delta_1}{h} \right)^{2p-1} \left(\frac{\delta_2}{h}\right)^{2p+1} 
+ 
 \left( \frac{\delta_1}{h} \right)^{2p+1} \left(\frac{\delta_2}{h}\right)^{2p-1} 
 \lesssim 
 h^{2p + d - 1}
\end{equation}
which implies
\begin{align}\label{eq:condition-delta}
\frac{\delta_1}{h} \lesssim h \left( \frac{\delta_2}{h} \right)^{-\frac{2p-1}{2p+1}} 
\qquad 
\text{and} 
\qquad 
\frac{\delta_2}{h} \lesssim h \left( \frac{\delta_1}{h} \right)^{-\frac{2p-1}{2p+1}} 
\end{align}
See Figure \ref{fig:Rh} for an illustration of this condition.
\begin{figure}
%\label{fig:delta-illustration-energy}
\centering
%\begin{subfigure}[t]{.4\linewidth}
\includegraphics[width=.5\linewidth]{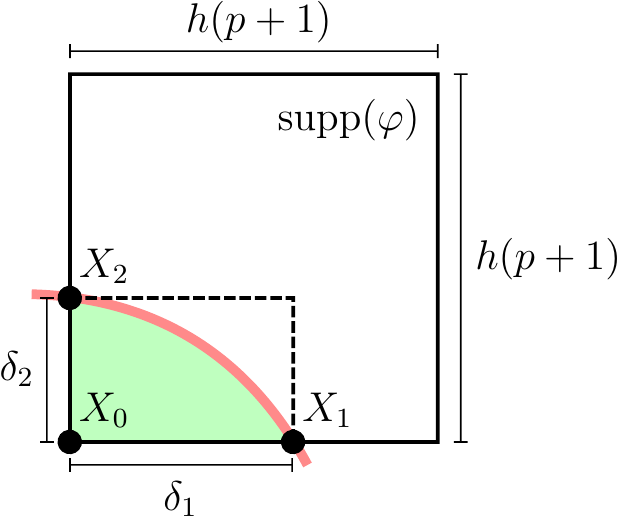}
%\caption{Energy norm}
%\end{subfigure}\qquad
% \begin{subfigure}[t]{.4\linewidth}
% \qquad \includegraphics[width=\linewidth]{./figs/fig-delta-max}
% \caption{Max norm}\label{fig:delta-illustration-max}
% \end{subfigure}
\caption{Illustration of the geometric quantities used in 
intersection conditions \eqref{eq:condition} in energy norm 
and \eqref{eq:condition-max} in max norm.}
\label{fig:delta-illustration}
\end{figure}

\begin{figure}
\centering
\begin{subfigure}[t]{.3\linewidth}
\includegraphics[width=\linewidth]{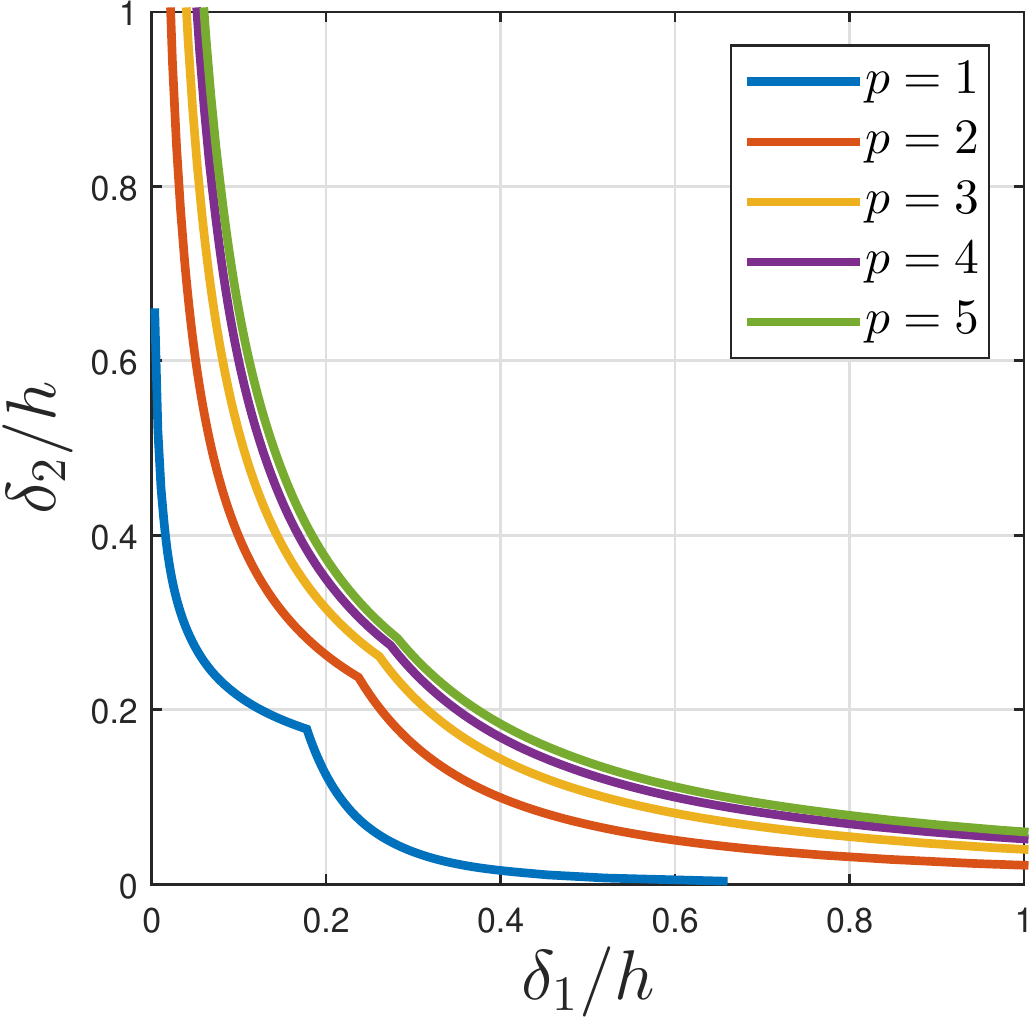}
\caption{Energy norm, $h=0.1$}
\end{subfigure}\quad
\begin{subfigure}[t]{.3\linewidth}
\includegraphics[width=\linewidth]{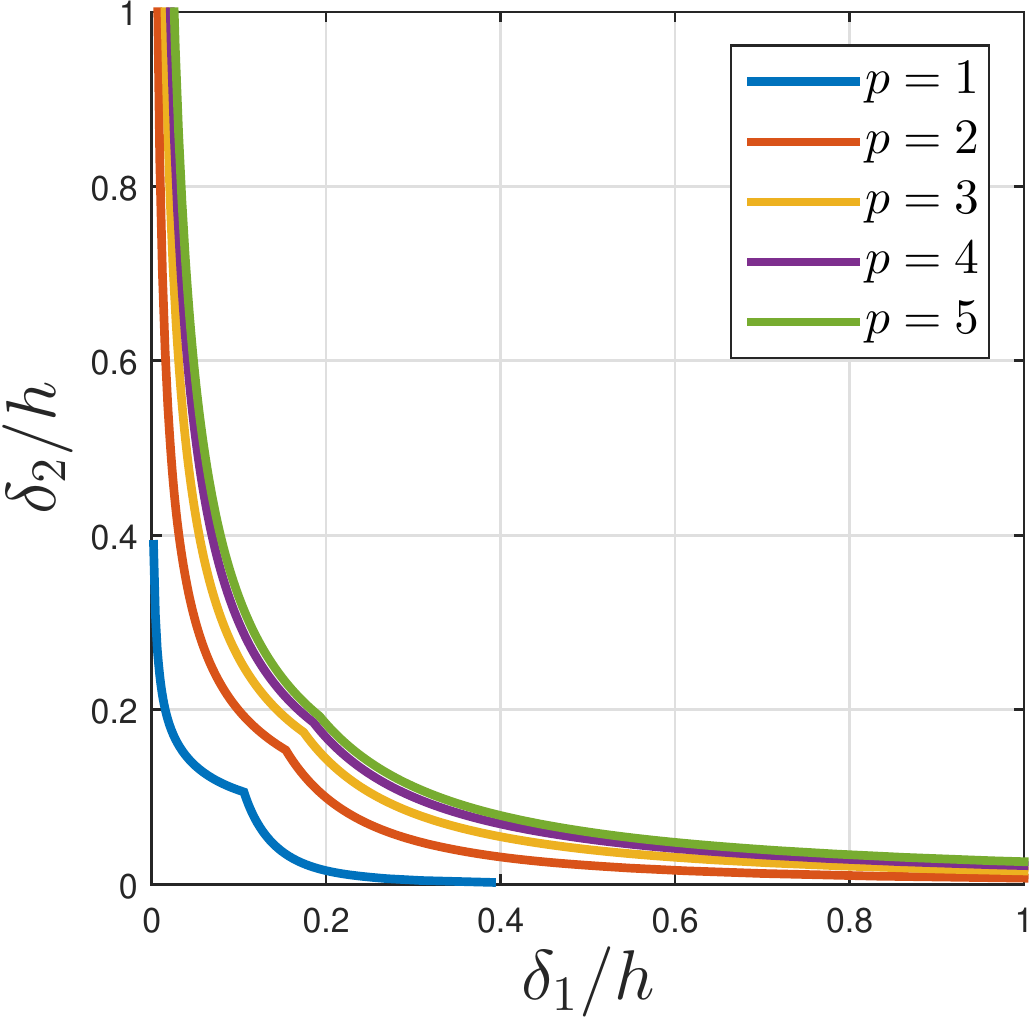}
\caption{Energy norm, $h=0.05$}
\end{subfigure}

\vspace{2ex}
\begin{subfigure}[t]{.3\linewidth}
\includegraphics[width=\linewidth]{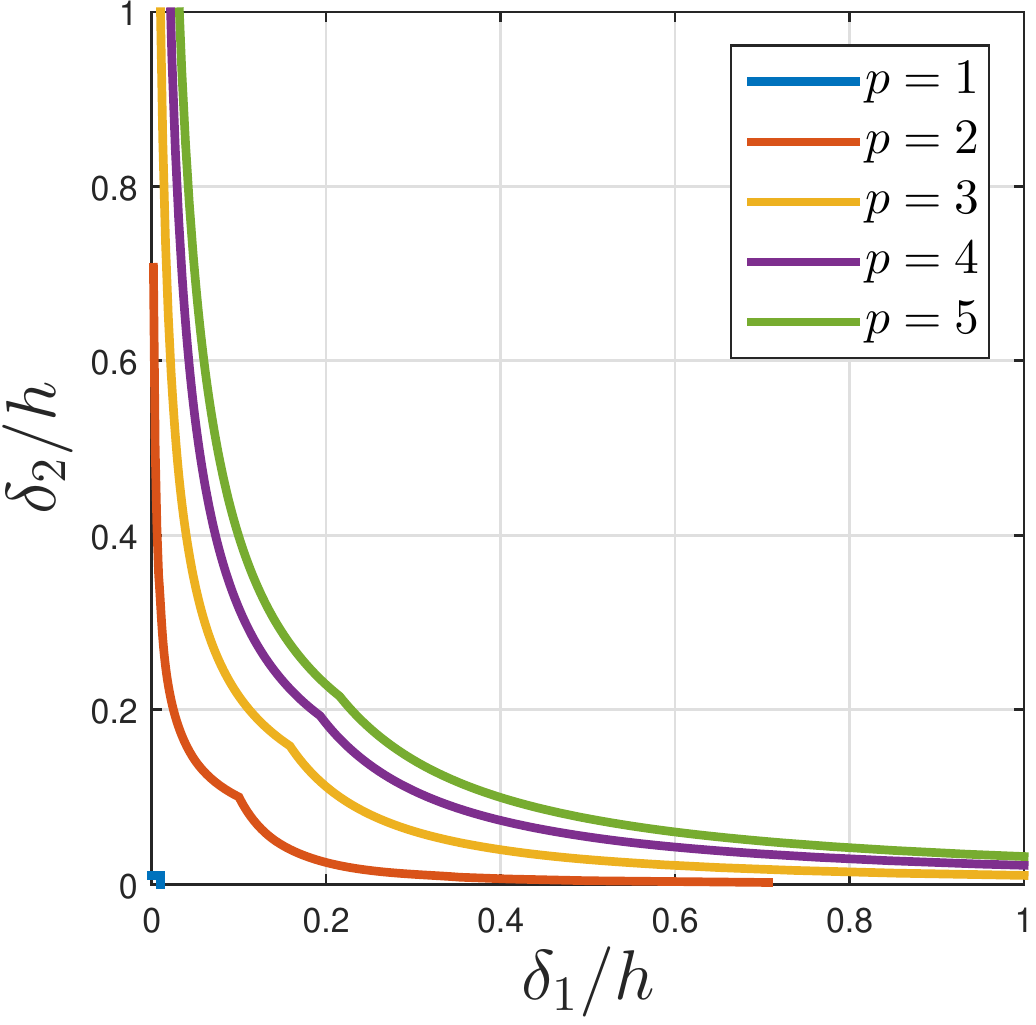}
\caption{Max norm, $h=0.1$}
\end{subfigure}\quad
\begin{subfigure}[t]{.3\linewidth}
\includegraphics[width=\linewidth]{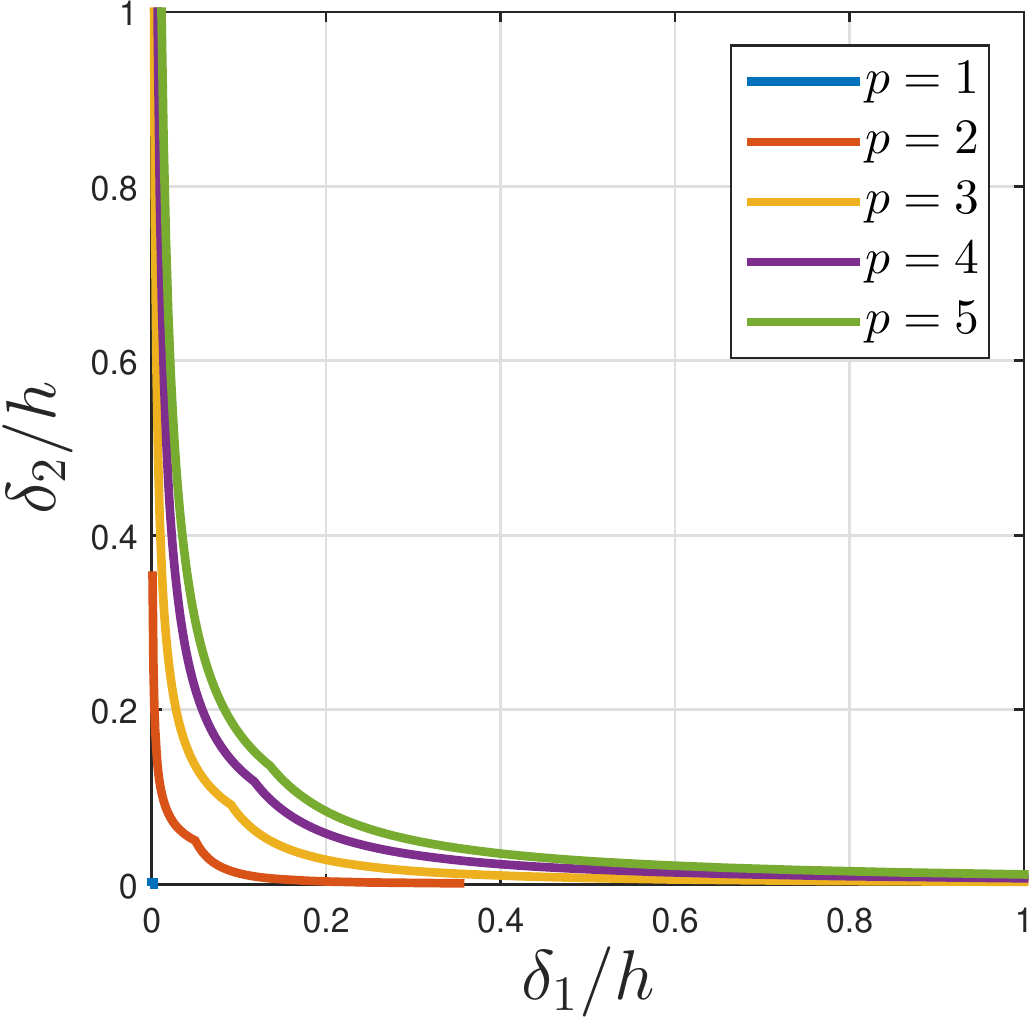}
\caption{Max norm, $h=0.05$}
\end{subfigure}
\caption{Illustrations of the basis function intersection condition \eqref{eq:condition} in energy norm and \eqref{eq:condition-max} in max norm for splines of polynomial order $p=1,2,\dots,5$.}
\label{fig:Rh}
\end{figure}

\paragraph{The 2D Case: Max Norm.} 
Starting from the expansion (\ref{eq:expansion-der}) and 
observing that for small enough $\delta$ parameters $|\nabla \varphi|^2$ is increasing when we move out from the vertex. 
Using assumption (\ref{eq:assumption-box}) we thus conclude 
that 
\begin{equation}
\| \nabla \varphi \|_{L^\infty( \supp (\varphi) \cap \Omega}
\lesssim |\nabla \varphi (\delta_1,\delta_2) |
\end{equation}
We have     
\begin{equation}
\nabla\varphi(x_1,x_2) 
= 
\left[ 
\frac{p}{h}
\left( \frac{x_1}{h} \right)^{p-1} \left(\frac{x_2}{h}\right)^p,
\;
\frac{p}{h}
\left( \frac{x_1}{h} \right)^{p} \left(\frac{x_2}{h}\right)^{p-1}
\right]
\end{equation}
Setting $x_1 = \delta_1$ and $x_2 = \delta_2$ we get the conditions
\begin{equation}
\frac{p}{h} \left( \frac{\delta_1}{h} \right)^{p-1} \left(\frac{\delta_2}{h}\right)^p \lesssim h^p 
\quad \text{and} \quad
\frac{p}{h}
\left( \frac{\delta_1}{h} \right)^{p} \left(\frac{\delta_2}{h}\right)^{p-1} \lesssim h^p
\end{equation}
which we may write in the form
\begin{equation}\label{eq:condition-max}
\frac{\delta_1}{h} \lesssim \frac{1}{p} h^{\frac{p+1}{p}}
\left( \frac{\delta_2}{h} \right)^{-\frac{p-1}{p}}
\quad 
\text{and}
\quad
\frac{\delta_2}{h}\lesssim \frac{1}{p} h^{\frac{p+1}{p}}
\left( \frac{\delta_1}{h} \right)^{-\frac{p-1}{p}}
\end{equation}
See Figure \ref{fig:Rh} for an illustration of this condition.

\section{Numerical Results}

\subsection{Linear Elasticity} \label{section:elasticity}
While we for simplicity use the Poisson model problem in the above analysis the same analysis holds also for other second order elliptic problems which may be of more practical interest. We therefore in the numerical results apply our findings to the  linear elasticity problem: find the displacement
$u:\Omega \rightarrow \IR^d$ such that
\begin{alignat}{2}\label{eq:standard-a}
-\sigma(u)\cdot \nabla &= f \qquad 
&& \text{in $\Omega$}
\\ \label{eq:standard-b}
\sigma(u) \cdot n &= g_N \qquad && 
\text{on $\partial\Omega_N$}
\\ \label{eq:standard-c}
u &= g_D \qquad && 
\text{on $\partial\Omega_D$}
\end{alignat}
where the stress and strain tensors are defined by 
\begin{equation}
\sigma(u) = 2\mu \epsilon(u) + \lambda\text{tr}(\epsilon(u)),
\qquad 
\epsilon(u) = \frac{1}{2}\Big( u\otimes\nabla + \nabla \otimes u \Big)
\end{equation}
with Lam\'e parameters $\lambda$ and $\mu$;
$f$, $g_N$, $g_D$ are given data; $a \otimes b$ 
is the tensor product of vectors $a$ and $b$ with elements
$(a \otimes b)_{ij} = a_i b_j$.

\paragraph{The Nonsymmetric Method for Linear Elasticty.} Find 
$u_{h,a} \in [V_{h,a}]^d$ such that 
\begin{equation}\label{eq:elasticity-method-nonsym}
A_{h}(u_{h,a},v) = L_h(v)\qquad v \in [V_{h,a}]^d
\end{equation}
The forms are defined by
\begin{align}
A_h(v,w) &= a_h(v,w) + \tau h^2(\epsilon(v)\cdot\nabla,\epsilon(w)\cdot\nabla)_{\mcT_{h,D} \cap \Omega}
\\
L_h(v) &= l_h(v) + \tau h^2 (f,\epsilon(v)\cdot\nabla)_{\mcT_{h,D}\cap \Omega}
\end{align}
where 
\begin{align}
a_h(v,w) &= (\sigma(v),\epsilon(w))_\Omega 
- (\sigma(v) \cdot n ,w)_{\partial \Omega_D} 
+ (v, \sigma(w)\cdot n )_{\partial \Omega_D} 
+ \beta h^{-1} (v,w)_{\partial \Omega_D}
\\
l_h(v)&= (f,v)_\Omega 
+ (g_N,v)_{\partial \Omega_N}
+ (g_D,\sigma(v)\cdot n)_{\partial\Omega_D} 
+ \beta h^{-1}(g_D,v)_{\partial \Omega_D}  
\end{align}
with positive parameters $\beta$ and $\tau$. Furthermore, the 
energy norm is defined
\begin{equation}
\tn v \tn_h^2 = ( \sigma(v) , \epsilon(v) )_\Omega
+ h^{-1} \| v \|_{\partial\Omega_D}^2
+ \tau h^2 \| \epsilon(v)\cdot \nabla \|^2_{\mcT_h(\partial \Omega_D) \cap \Omega}
\end{equation}
%where $\sigma(v) : \epsilon(v) = \sum_{i,j=1}^d \sigma_{ij}(v) : \epsilon_{ij}(v)$.

\begin{figure}
\centering
\begin{subfigure}[t]{.4\linewidth}
\includegraphics[width=\linewidth]{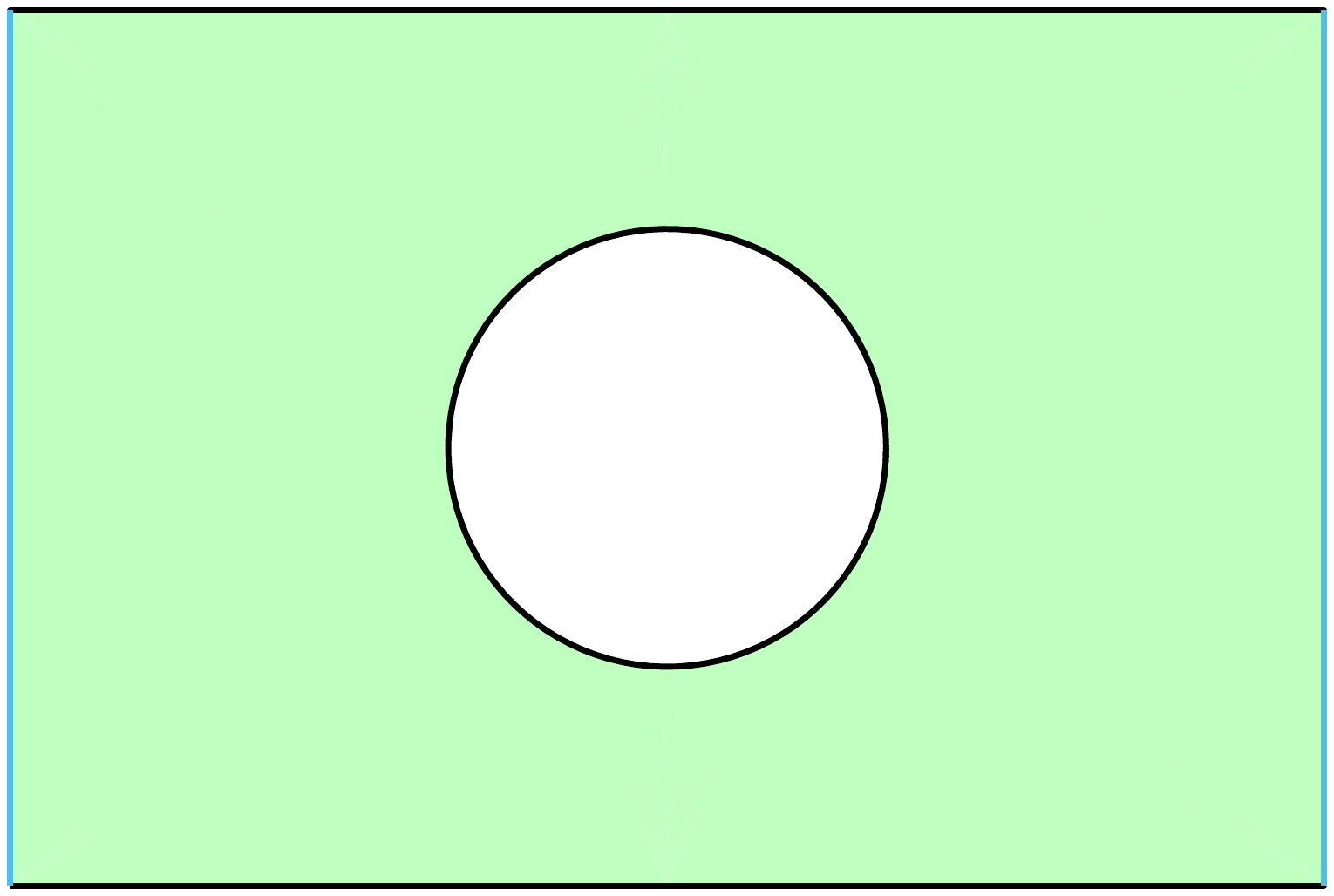}
\caption{Neumann problem}\label{fig:geom-neumann}
\end{subfigure}\qquad
\begin{subfigure}[t]{.3\linewidth}
\includegraphics[width=\linewidth]{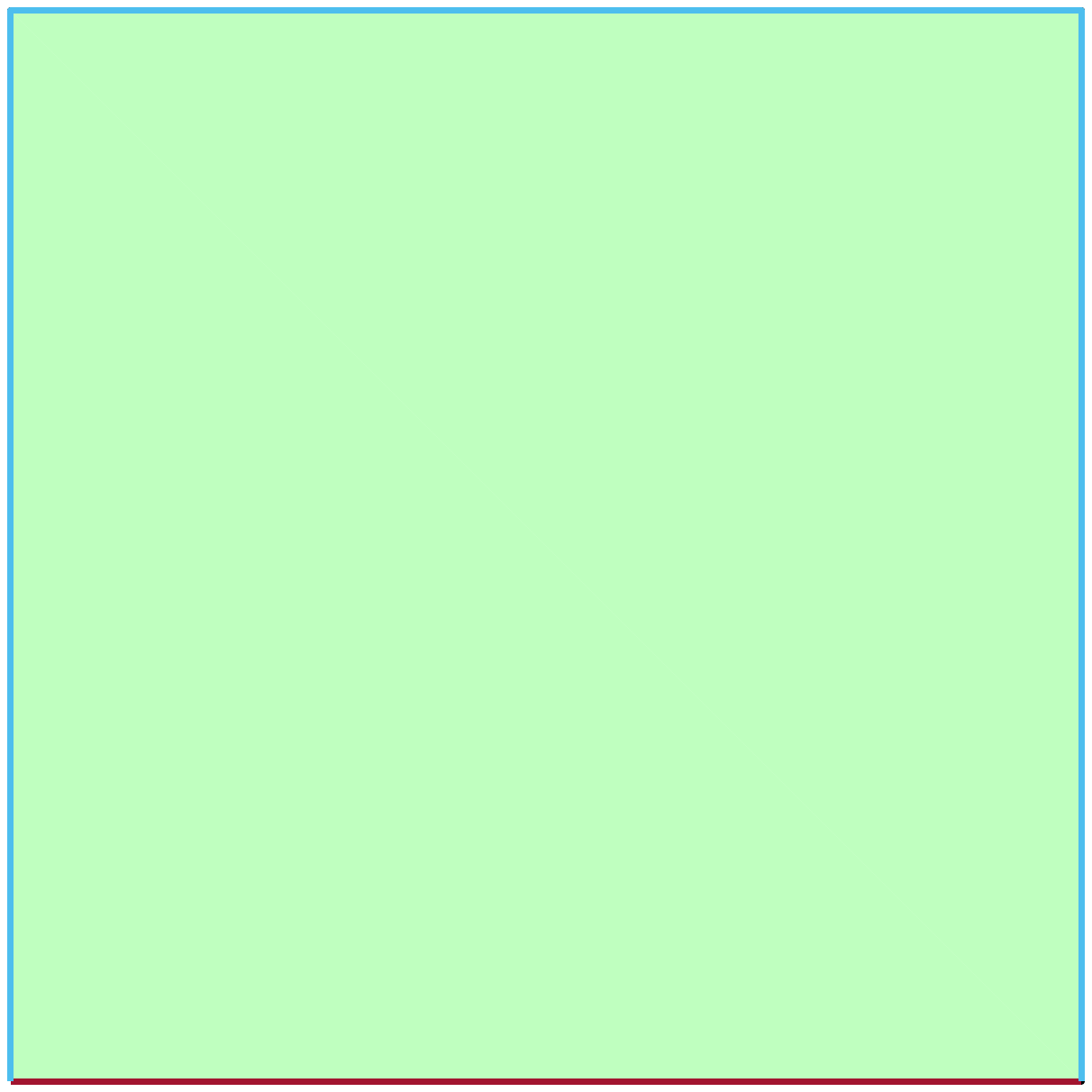}
\caption{Manufactured problem}\label{fig:geom-manufactured}
\end{subfigure}
\caption{Geometries in the two model problems. Boundaries with non-homogeneous Neumann conditions are indicated in blue and Dirichlet boundaries are indicated in red.}
\label{fig:geom}
\end{figure}

\paragraph{A Neumann Problem.}
To illustrate the selection of spline basis functions to remove we first consider a pure Neumann problem with the geometry presented in Figure~\ref{fig:geom-neumann}. The domain is symmetrically pulled from the left and the right using a unitary traction load. We assume a linear isotropic material with an $E$-modulus of $E=100$ and a Poisson ratio of $\nu=0.3$. To ensure the discretized problem is well posed we seek solutions orthogonal to the rigid body modes by using Lagrange multipliers.

\paragraph{A Manufactured Problem.}
To numerically estimate convergence rates we use the following manufactured problem from \cite{HaLaLa18}.
The geometry and the solution is given by
\begin{align}
&\Omega = [0,1]^2
 , \
\partial\Omega_D = \{x\in[0,1],y=0\} , \
\partial\Omega_N = \partial\Omega \backslash \partial\Omega_D
\\
&u(x,y) = [ -\cos(\pi x)\sin(\pi y), \, \sin(\pi x/7)\sin(\pi y/3) ]/10
\end{align}
see Figure~\ref{fig:geom-manufactured}. Assuming a linear isotropic material with the material parameters of steel we deduce expressions for the input data $f$, $g_N$ and $g_D$. Note that while this problem does include a Dirichlet boundary $\partial\Omega_D$ we in our current implementation neglect the least squares term in the vicinity of $\partial\Omega_D$, i.e. we choose $\tau=0$.

\subsection{Illustration of the Selection Procedure}
We utilize the selection procedure based on the stiffness matrix proposed in Section~\ref{section:interpolation}.
Some realizations of this selection are visualized in Figure~\ref{fig:mesh-rem-ex} where we note that the selection becomes more restrictive as the mesh size decreases. This is a natural effect as the selection procedure is developed to ensure optimal approximation properties of the active spline space $V_{h,a}$. We also note that when increasing spline order more basis functions are removed when using the same constant in the tolerance $tol = c h^p$. This can also be seen in Figure~\ref{fig:cumsum} where we investigate how the choice of this constant effects the number of removed basis functions.
In Figure~\ref{fig:neumann-example} we note that the use of basis removal is quite effective and also give better quality stresses along the boundary.

\begin{figure}
\centering
\begin{subfigure}[t]{.4\linewidth}
\includegraphics[width=\linewidth]{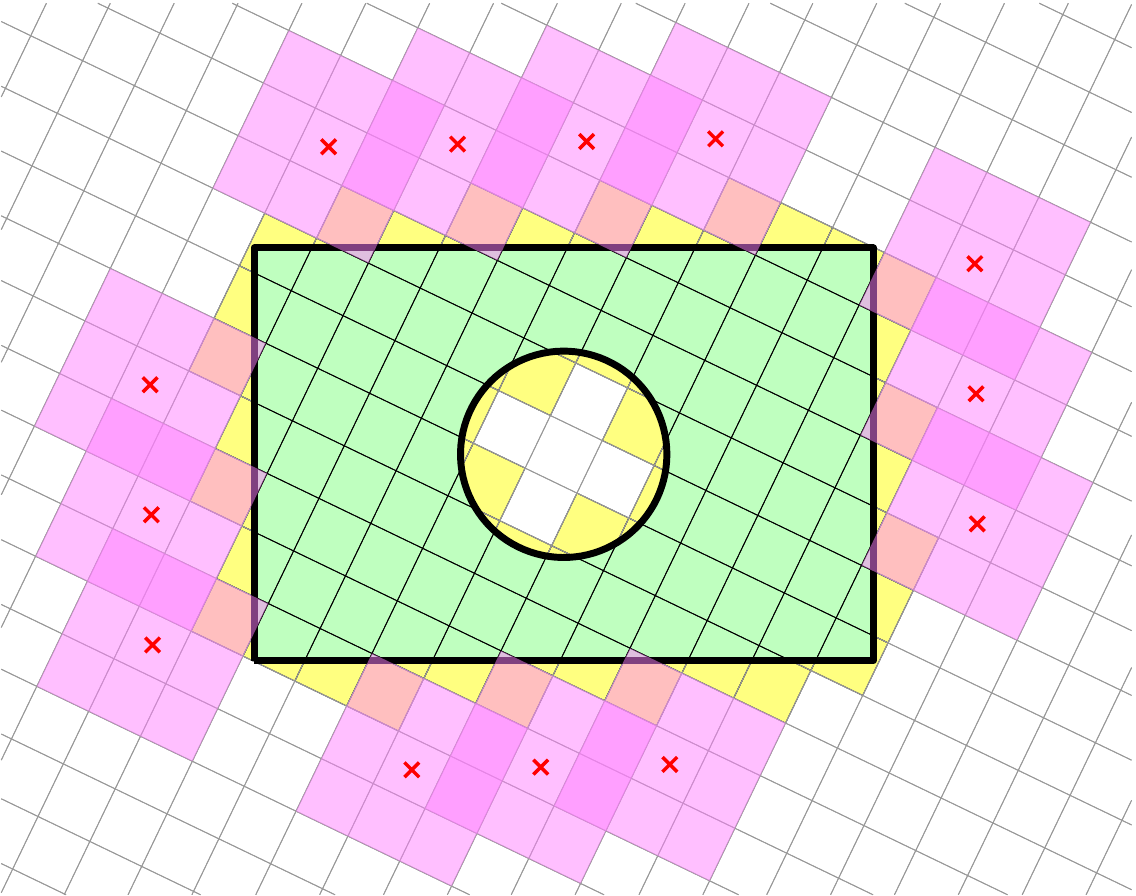}
\caption{$C^1Q^2$, $h=0.4$}
\end{subfigure}\qquad
\begin{subfigure}[t]{.4\linewidth}
\includegraphics[width=\linewidth]{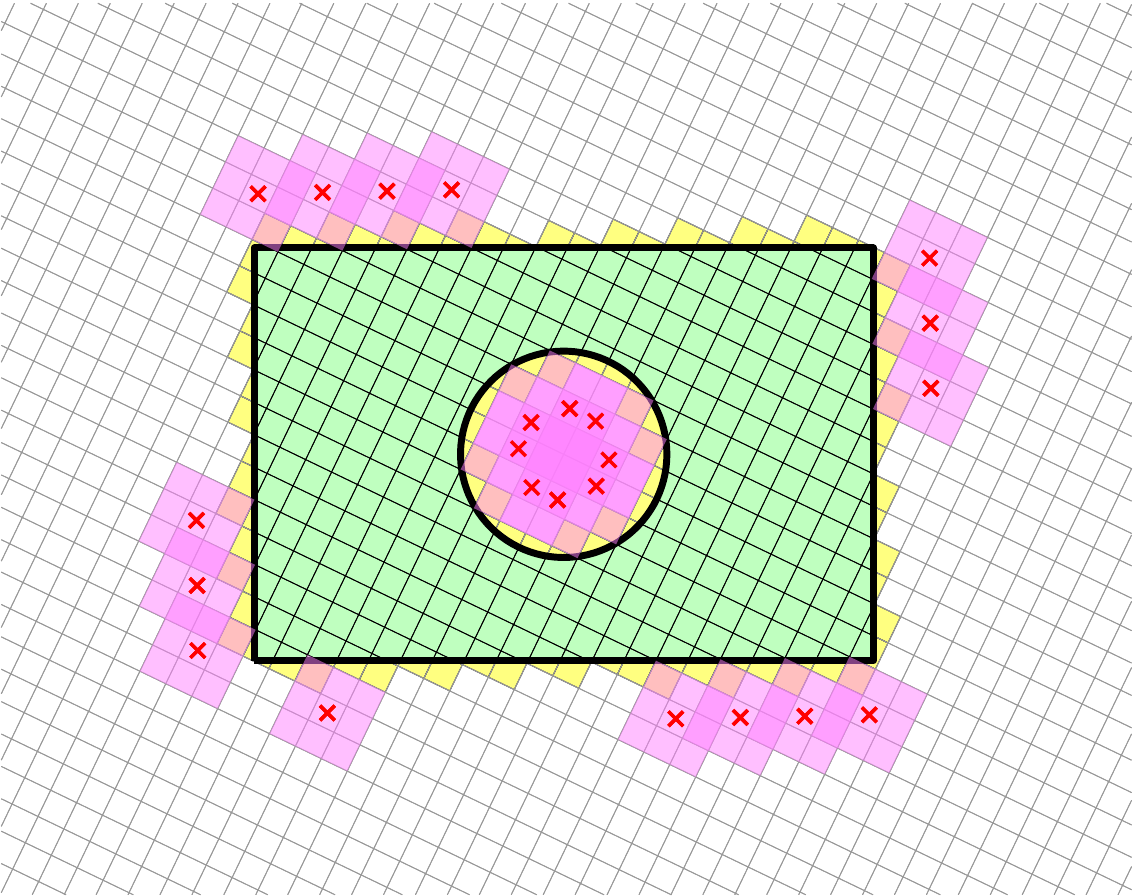}
\caption{$C^1Q^2$, $h=0.2$}
\end{subfigure}

\vspace{2ex}
\begin{subfigure}[t]{.4\linewidth}
\includegraphics[width=\linewidth]{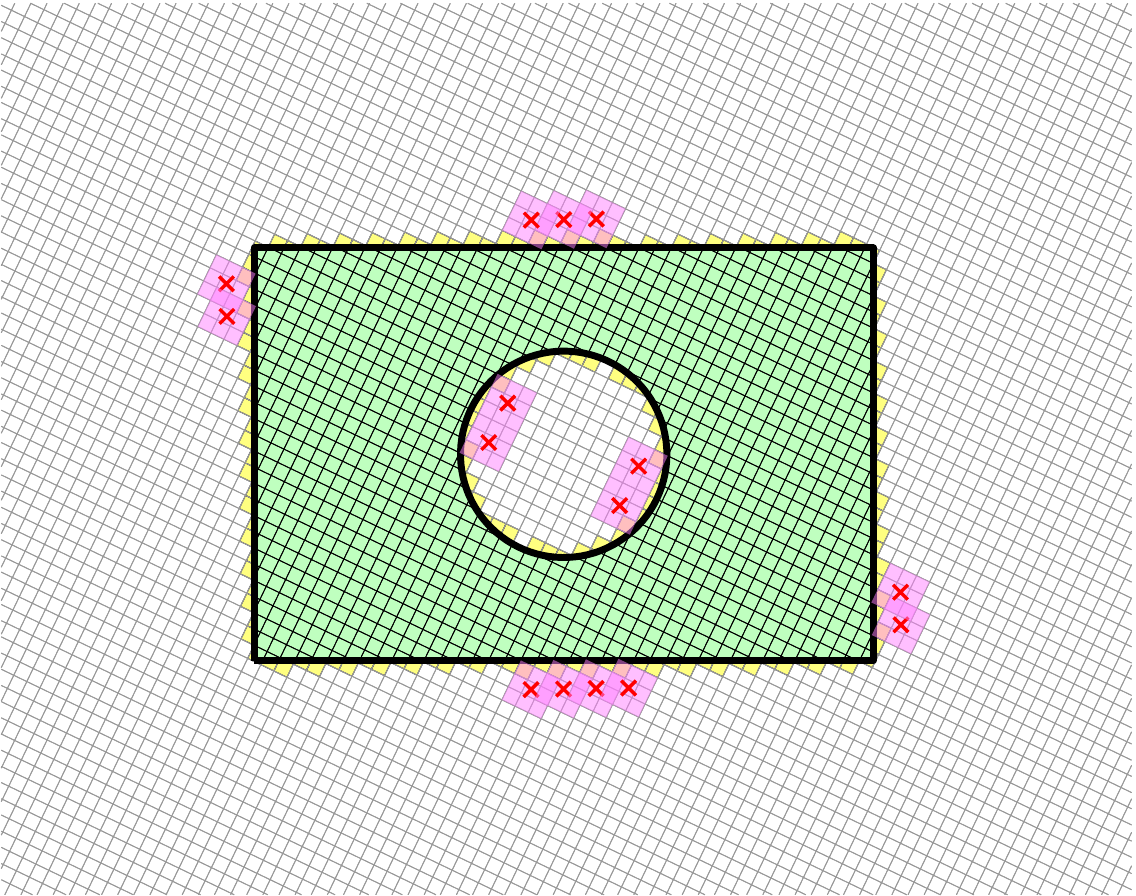}
\caption{$C^1Q^2$, $h=0.1$}
\end{subfigure}\qquad
\begin{subfigure}[t]{.4\linewidth}
\includegraphics[width=\linewidth]{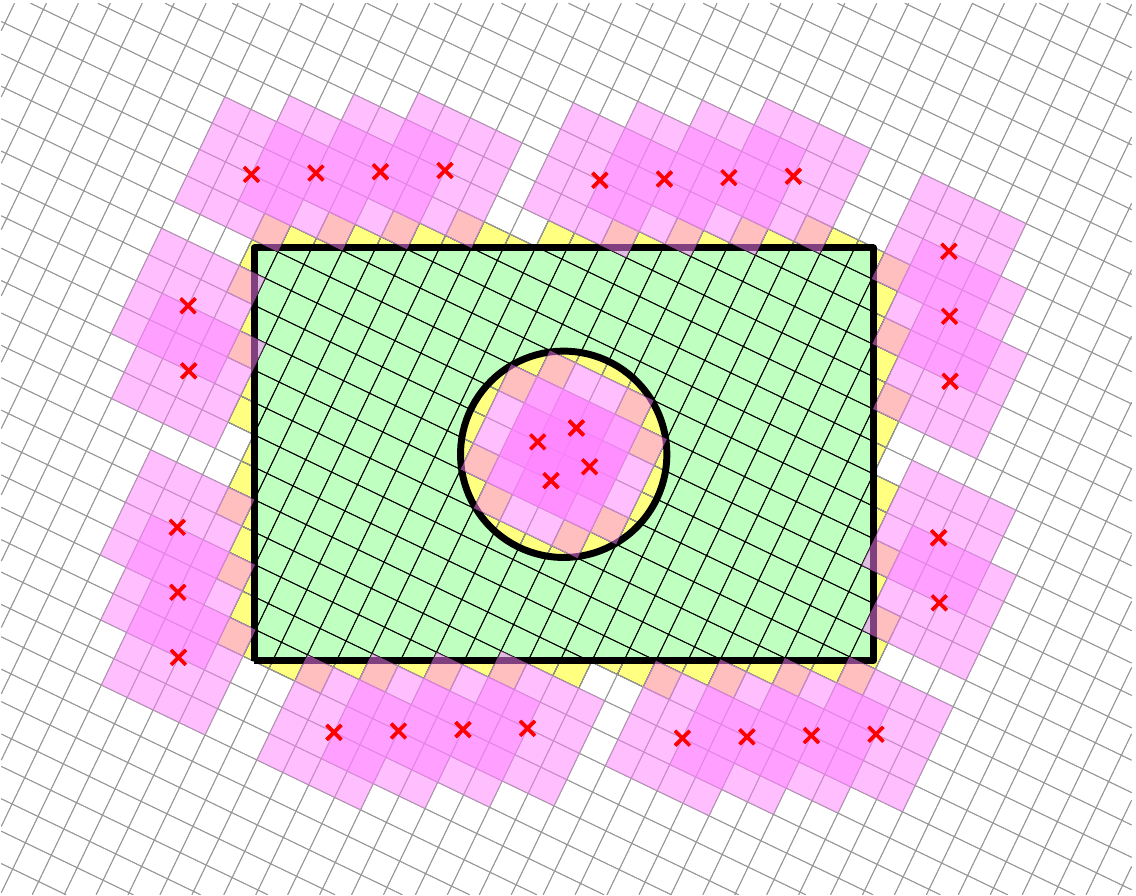}
\caption{$C^2Q^3$, $h=0.2$}
\end{subfigure}
\caption{Four realizations of removed basis functions using on the stiffness matrix based selection procedure described in Section~\ref{section:interpolation}; all using the same constant $c=0.01$ for the tolerance $tol = ch^{p} \times \sqrt{E}$ in \eqref{eq:condition}.
Each cross marks a removed basis function and the domain of its support is visualized in pink.
In (a)--(c) we note that the selection becomes more restrictive with smaller mesh size $h$.
Comparing (b) and (d) we also note that more basis functions typically may be removed as the spline order increases.}
\label{fig:mesh-rem-ex}
\end{figure}

\begin{figure}
\centering
\begin{subfigure}[t]{.35\linewidth}
\includegraphics[width=\linewidth]{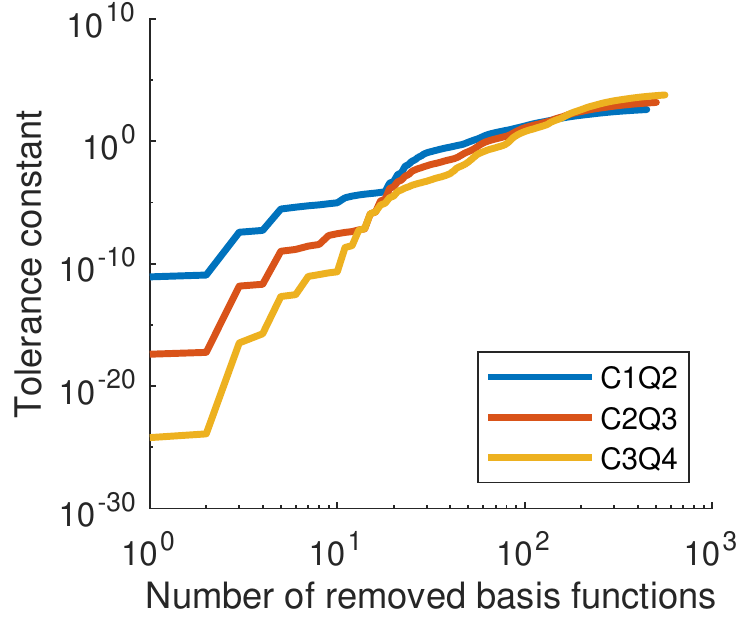}
\caption{$h=0.2$}
\end{subfigure}\qquad
\begin{subfigure}[t]{.35\linewidth}
\includegraphics[width=\linewidth]{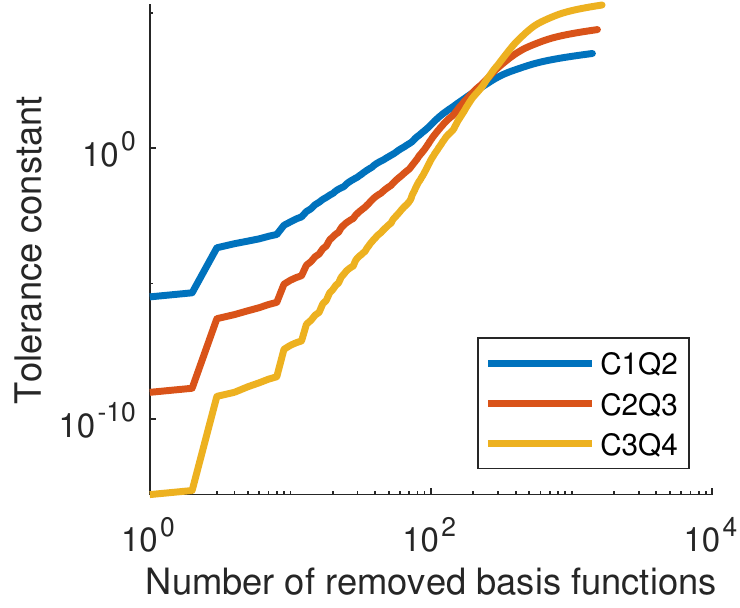}
\caption{$h=0.1$}
\end{subfigure}
\caption{Studies of how the choice of constant $c$ for the tolerance $tol=c h^p \times \sqrt{E}$ in \eqref{eq:condition} relates to the number of removed basis functions. The set-up here is the same as in Figure~\ref{fig:mesh-rem-ex}.}
\label{fig:cumsum}
\end{figure}

\begin{figure}
\centering
\begin{subfigure}[t]{.45\linewidth}
\includegraphics[width=\linewidth]{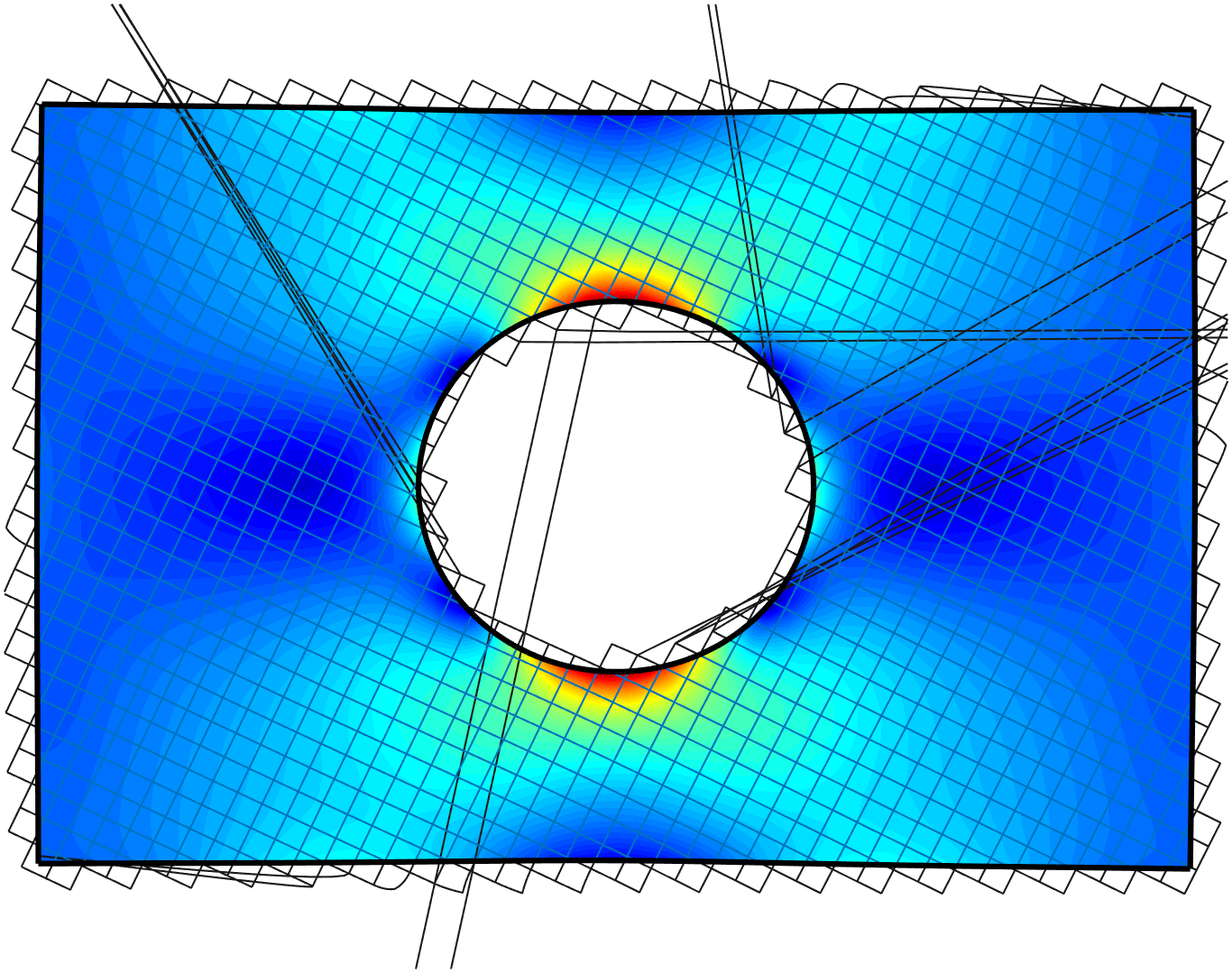}
\caption{Standard solution}
\end{subfigure}\qquad
\begin{subfigure}[t]{.45\linewidth}
\includegraphics[width=\linewidth]{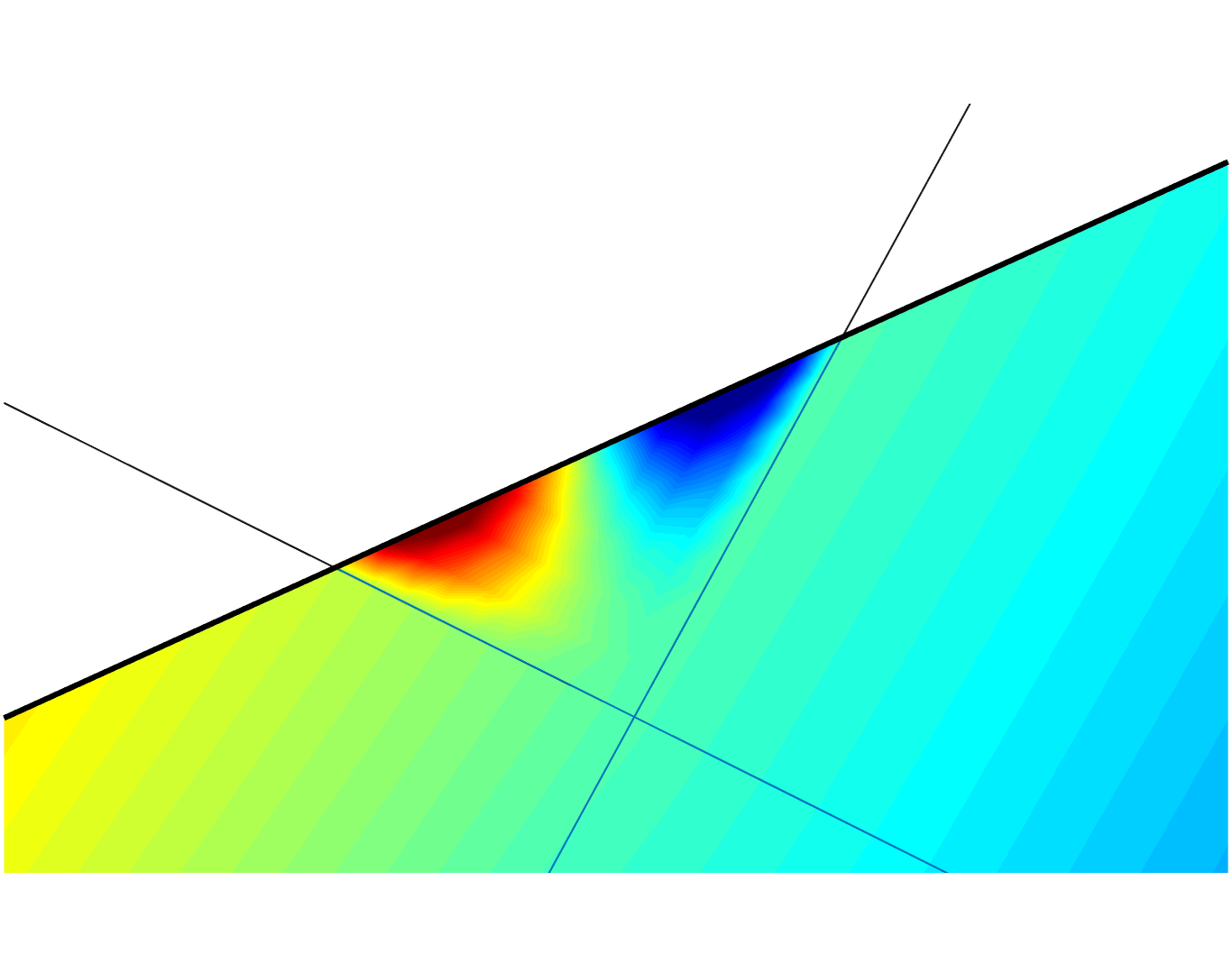}
\caption{Detail in standard solution}
\end{subfigure}

\vspace{2ex}
\begin{subfigure}[t]{.45\linewidth}
\includegraphics[width=\linewidth]{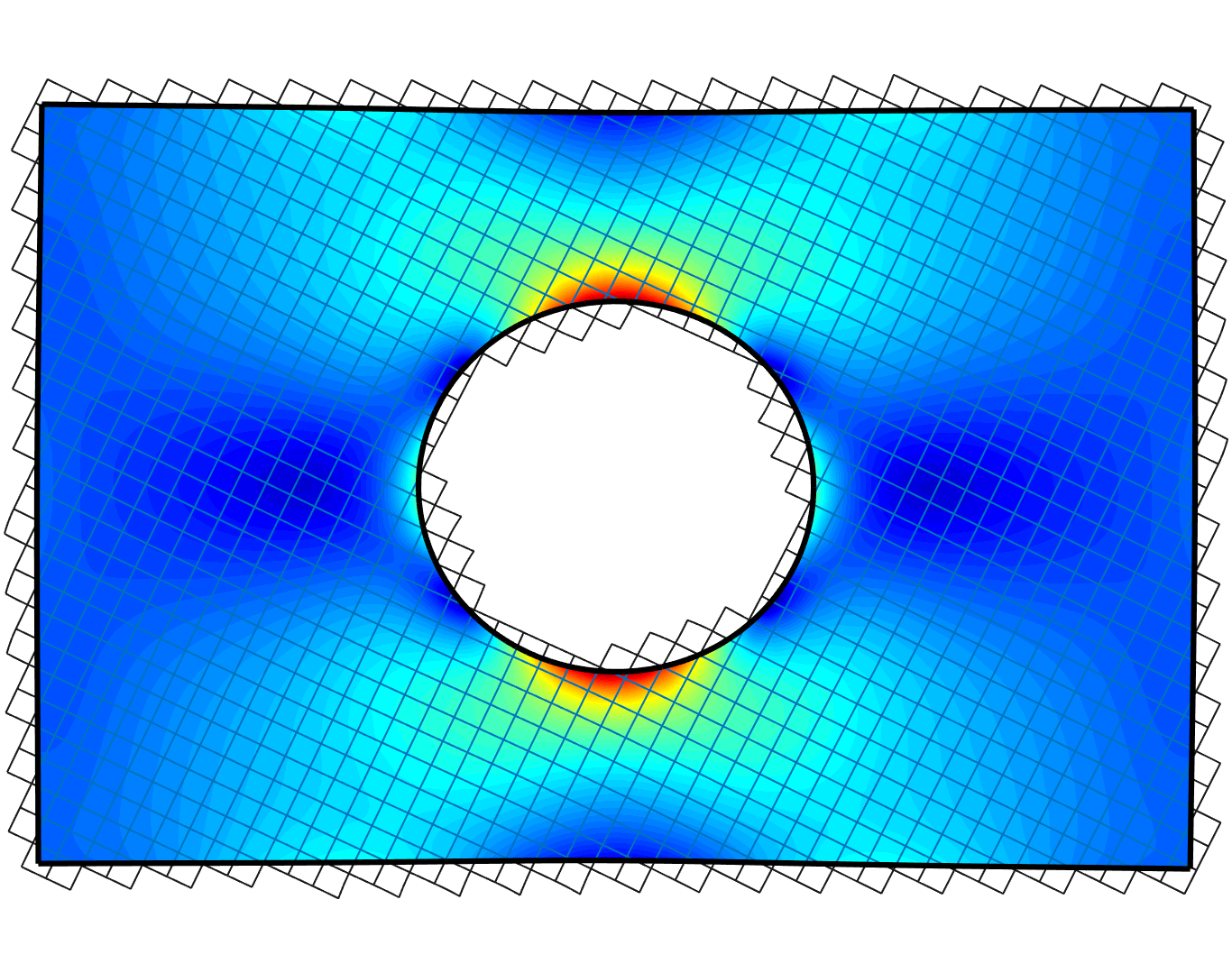}
\caption{Basis removal solution}
\end{subfigure}\qquad
\begin{subfigure}[t]{.45\linewidth}
\includegraphics[width=\linewidth]{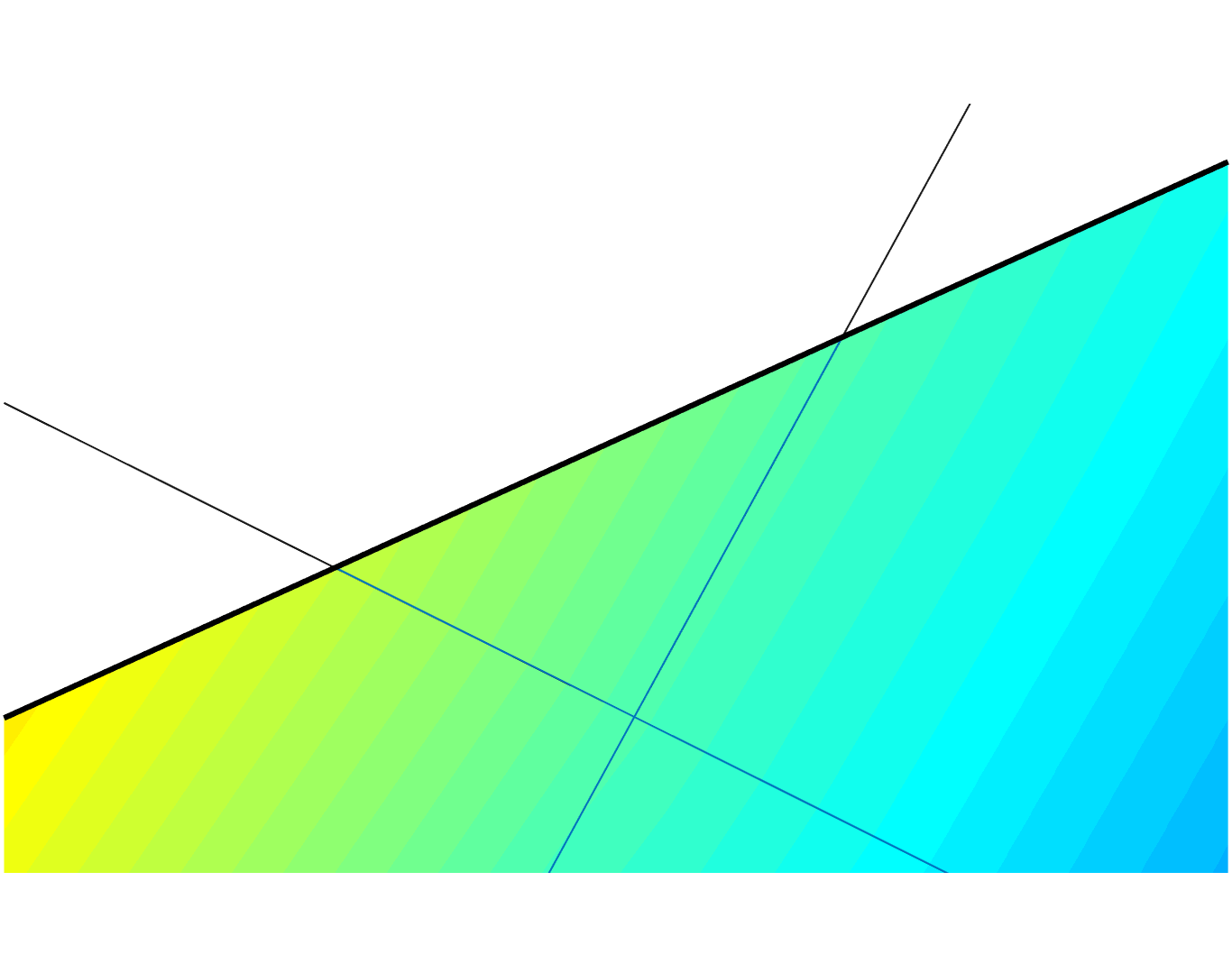}
\caption{Detail in basis removal solution}
\end{subfigure}
\caption{Displacements and von-Mises stresses from numerical solutions with and without basis removal in the Neumann problem using $C^1 Q^2$-splines and mesh size $h=0.1$. In the detailed view we note poor quality of the stresses on the boundary in the standard solution which is remedied when removing the problematic basis function.}
\label{fig:neumann-example}
\end{figure}

\subsection{Convergence}

To estimate the convergence we use the manufactured problem described in Section~\ref{section:elasticity} and the cut situations are induced by rotating the background grid $\pi/7$ radians as illustrated by the mesh with removed basis functions in Figure~\ref{fig:manufactured-example} together with the corresponding numerical solution.
In Figure~\ref{fig:conv-energy-norm} we present convergence studies in energy norm for various choices of the constant $c$ in the tolerance $tol = c h^{p} \times \sqrt{E}$ used in the selection procedure. As can be seen, a larger constant naturally means a larger error, but the convergence rates remain optimal.
The stiffness matrix condition numbers corresponding to these convergence studies
is presented in Figure~\ref{fig:condest}. It can be noted that while basis removal greatly reduce the size of the condition numbers, basis removal alone does not yield an optimal scaling of $O(h^{-2})$.

\begin{figure}
\centering
\begin{subfigure}[t]{.49\linewidth}
\includegraphics[width=\linewidth]{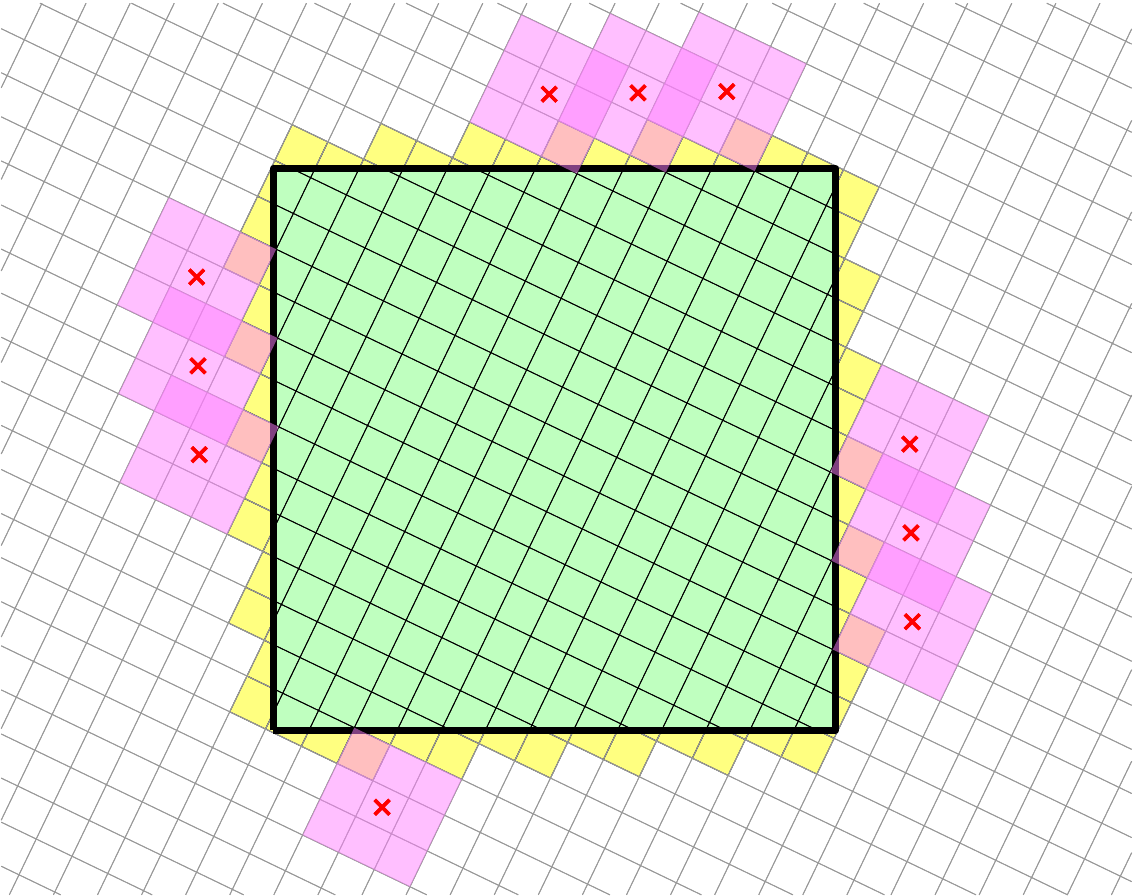}
\caption{Mesh and removed spline basis functions} \label{fig:manufactured-mesh}
\end{subfigure}
\begin{subfigure}[t]{.49\linewidth}
\includegraphics[width=\linewidth]{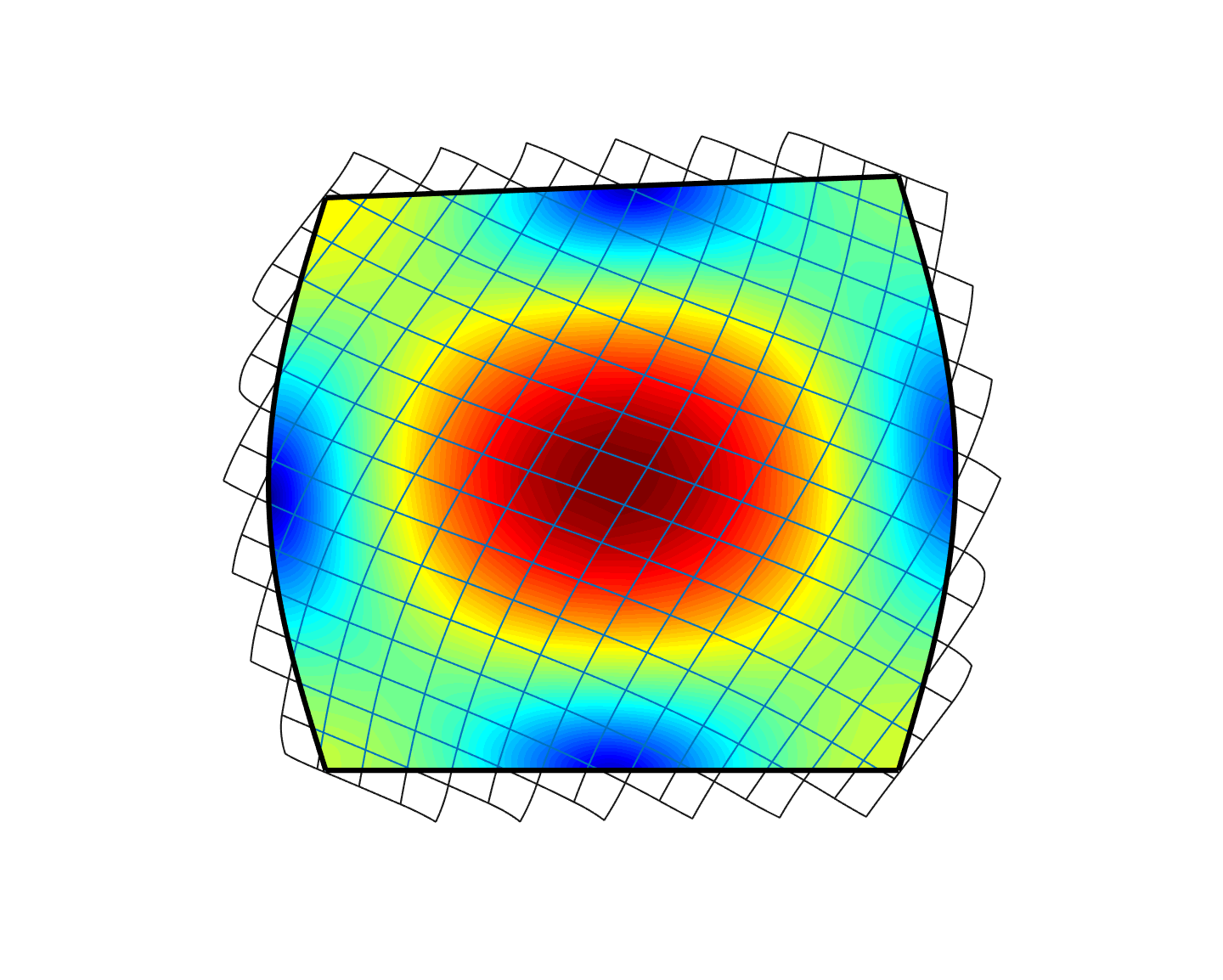}
\caption{Numerical solution}\label{fig:manufactured-sol}
\end{subfigure}
\caption{Example of numerical solution using $C^1Q^2$ splines and mesh size $h=0.1$. The mesh is rotated $\pi/7$ radians to induce cut situations and the removed basis functions are selected using the tolerance $tol=0.01 h^2 \times \sqrt{E}$.}
\label{fig:manufactured-example}
\end{figure}

\begin{figure}
\centering
\begin{subfigure}[t]{.45\linewidth}
\includegraphics[width=\linewidth]{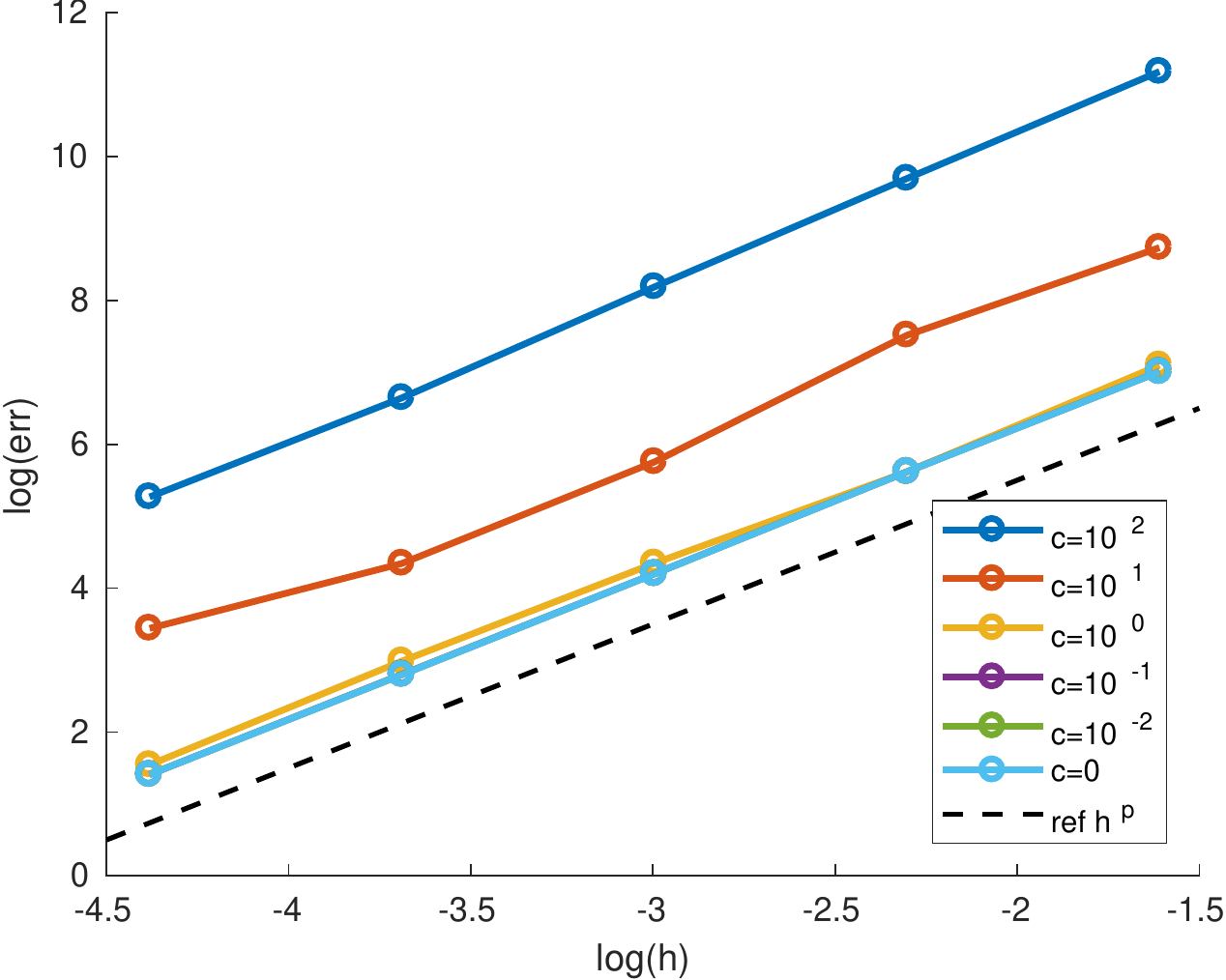}
\caption{Energy norm convergence} \label{fig:conv-energy-norm}
\end{subfigure}\quad
\begin{subfigure}[t]{.45\linewidth}
\includegraphics[width=\linewidth]{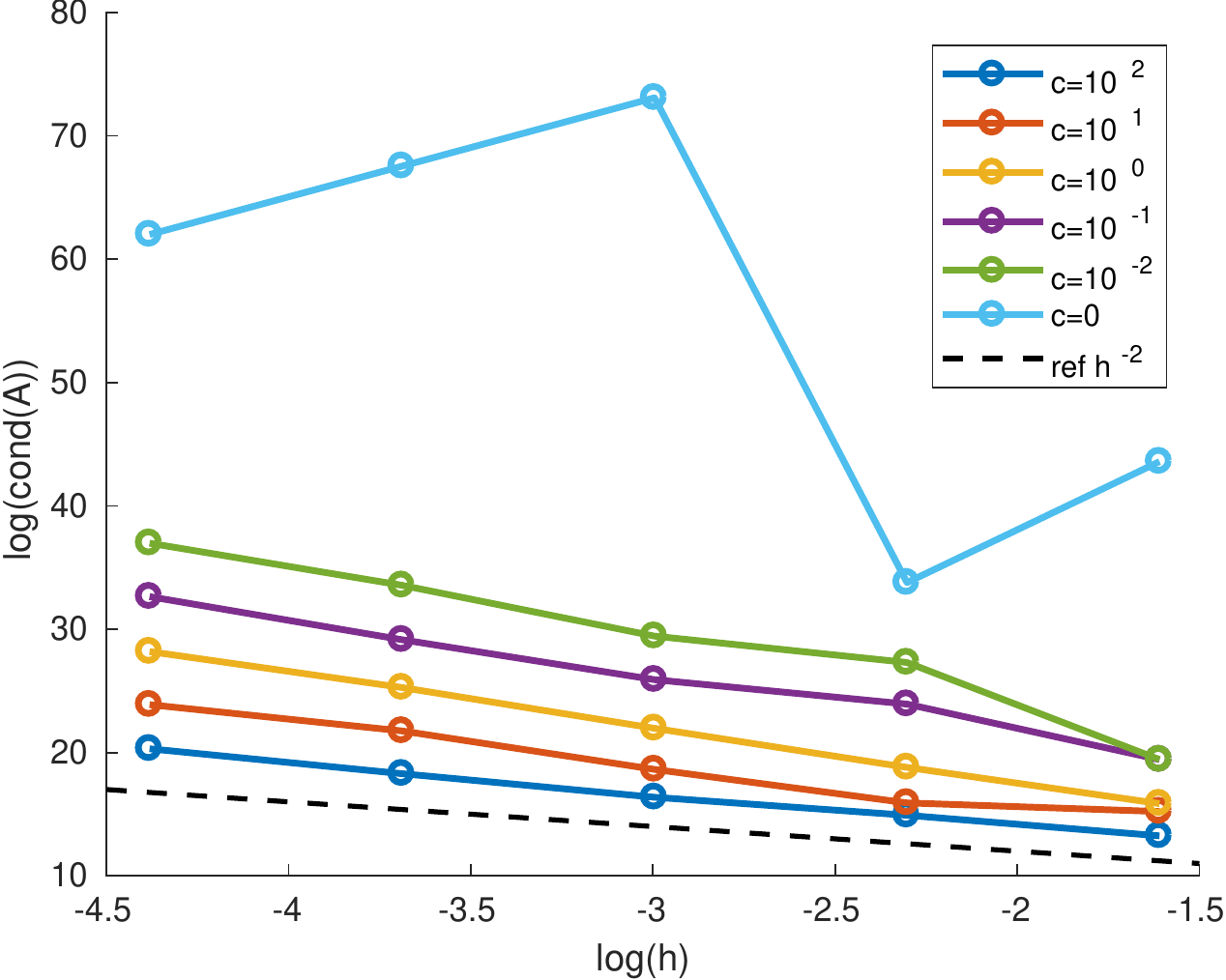}
\caption{Condition number} \label{fig:condest}
\end{subfigure}
\caption{Convergence in $\tn\cdot\tn_h$ norm and condition numbers for the manufactured problem using basis removal with $C^1Q^2$-spline basis. The tolerances used in the selection procedure is $tol = c h^p \times\sqrt{E}$ and we note that the choices $c=10^{-1}$ and $c=10^{-2}$ give no visible difference in the error compared to using the full approximation space ($c=0$).}
\end{figure}

\section{Conclusion} 

We have shown that:
\begin{itemize}
\item Basis function removal can be done in 
a rigorous way which guarantees optimal order of 
convergence and that the resulting linear system is not 
arbitrarily close to singular. These results critically depend 
on the smoothness of the B-spline spaces.
\item Basis function removal is easy to implement and 
efficient since there is no fill-in in the stiffness 
matrix as is the case in for instance face based 
stabilization. Furthermore, basis function 
removal is consistent in contrast to the finite cell 
method.
\end{itemize}
We note however that even though the stiffness matrix is 
not arbitrarily close to singular the resulting condition 
number will in general be worse than $O(h^{-2})$, which is 
the optimal scaling for standard finite element approximation 
of second order elliptic problems and therefore a direct solver 
or preconditioning in combination with an iterative solver 
is necessary in practice.

%\clearpage
% bibliography
%\printbibliography
\bibliographystyle{abbrv}
\bibliography{refs}

\end{document}